    \newcommand\contFrac{\@ifstar{\@contFracStar}{\@contFracNoStar}}
    \def\singleContFrac#1#2{%
        \begin{array}{@{}c@{}}%
            \multicolumn{1}{c|}{#1}%
            \\%
            \hline%
            \multicolumn{1}{|c}{#2}%
        \end{array}%
    }
    \def\@contFracNoStar#1{%
        \mathchoice{
            \@contFracNoStarDisplay@#1//\@nil%
        }{
            \@contFracNoStarInline@#1//\@nil%
        }{
            \@contFracNoStarInline@#1//\@nil%
        }{
            \@contFracNoStarInline@#1//\@nil%
        }%
    }
    \def\@contFracNoStarDisplay@#1//#2\@nil{%
        \@ifmtarg{#2}{%
            #1%
        }{%
            #1+\cfrac{1}{\@contFracNoStarDisplay@#2\@nil}%
        }%
    }
        \def\@contFracNoStarInline@#1//#2\@nil{%
            \@ifmtarg{#2}{%
                #1%
            }{%
                #1 \@@contFracNoStarInline@@#2\@nil%
            }%
        }
        \def\@@contFracNoStarInline@@#1//#2\@nil{%
            \@ifmtarg{#2}{%
                + \singleContFrac{1}{#1}%
            }{%
                + \singleContFrac{1}{#1} \@@contFracNoStarInline@@#2\@nil%
            }%
        }
    \def\@contFracStar#1{%
        \mathchoice{
            \@contFracStarDisplay@#1////\@nil%
        }{
            \@contFracStarInline@#1//\@nil%
        }{
            \@contFracStarInline@#1//\@nil%
        }{
            \@contFracStarInline@#1//\@nil%
        }%
    }
    \def\@contFracStarDisplay@#1//#2//#3\@nil{%
        \@ifmtarg{#2}{%
            #1%
        }{%
            #1 + \cfrac{#2}{\@contFracStarDisplay@#3\@nil}%
        }%
    }
        \def\@contFracStarInline@#1//#2\@nil{%
            \@ifmtarg{#2}{%
                #1%
            }{%
                #1 \@@contFracStarInline@@#2\@nil%
            }%
        }
        \def\@@contFracStarInline@@#1//#2//#3\@nil{%
            \@ifmtarg{#3}{%
                - \singleContFrac{#1}{#2}%
            }{%
                - \singleContFrac{#1}{#2} \@@contFracStarInline@@#3\@nil%
            }%
        }
\def\cFrac#1#2{%
\begin{array}{@{}c@{}}\multicolumn{1}{c|}{#1}\\%
\hline\multicolumn{1}{|c}{#2}\end{array}}
\theoremstyle{plain}
\newtheorem{theorem}{Theorem}[section]
\newtheorem{proposition}[theorem]{Proposition}
\theoremstyle{definition}
\theoremstyle{remark}
\newtheorem{remark}[theorem]{Remark}
\newtheorem*{remark*}{Remark}
\numberwithin{equation}{section}
\newcommand\D{\displaystyle}
\newcommand\ZZ{{\mathbb Z}}
\newcommand\PP{{\mathbb P}}
\newcommand\p{\mbox{$\mathfrak{p}$}}
\newcommand*{\temp}{\multicolumn{0}{|c}{}}
\title[AR factorizations for birth-death chains on $\mathbb{Z}$]{Absorbing-reflecting factorizations for birth-death chains \\ on the integers and their Darboux transformations}
\author{Manuel D. de la Iglesia}
\address{Manuel D. de la Iglesia\\
Instituto de Matem\'aticas, Universidad Nacional Aut\'onoma de M\'exico, Circuito Exterior, C.U., 04510, Ciudad de M\'exico, M\'exico.}
\email{mdi29@im.unam.mx}
\author{Claudia Juarez}
\address{Claudia Juarez\\
Instituto de Investigaciones en Matem\'aticas Aplicadas y en Sistemas, Universidad Nacional Aut\'onoma de M\'exico, Circuito Escolar 3000, C.U., 04510, Ciudad de M\'exico, M\'exico.}
\email{ClaudiaJrz@ciencias.unam.mx}
\date{\today}
\thanks{
}
\thanks{This work was partially supported by PAPIIT-DGAPA-UNAM grant IN104219 (M\'exico) and CONACYT grant A1-S-16202 (M\'exico).}
\date{\today}
\subjclass[2010]{60J10, 33C45, 42C05}
\keywords{Birth-death chains. Matrix factorizations. Darboux transformations. Orthogonal polynomials. Geronimus and Christoffel transformations}
\begin{document}

\maketitle

\begin{abstract}
We consider a new way of factorizing the transition probability matrix of a discrete-time birth-death chain on the integers by means of an absorbing and a reflecting birth-death chain to the state 0 and viceversa. First we will consider reflecting-absorbing factorizations of birth-death chains on the integers. We give conditions on the two free parameters such that each of the factors is a stochastic matrix. By inverting the order of the factors (also known as a Darboux transformation) we get new families of ``almost'' birth-death chains on the integers with the only difference that we have new probabilities going from the state $1$ to the state $-1$ and viceversa. On the other hand an absorbing-reflecting factorization of birth-death chains on the integers is only possible if both factors are splitted into two separated birth-death chains at the state $0$. Therefore it makes more sense to consider absorbing-reflecting factorizations of ``almost'' birth-death chains with extra transitions between the states $1$ and $-1$ and with some conditions. This factorization is now unique and by inverting the order of the factors we get a birth-death chain on the integers. In both cases we identify the spectral matrices associated with the Darboux transformation, the first one being a Geronimus transformation and the second one a Christoffel transformation of the original spectral matrix. We apply our results to examples of chains with constant transition probabilities.

\end{abstract}

\section{Introduction}

This paper is a continuation of our work \cite{dIJ} about stochastic factorizations of the transition probability matrix $P$ of a discrete-time birth-death chain on the integers $\ZZ$ (doubly infinite tridiagonal matrix) and the relation between the spectral matrices after performing the so-called discrete Darboux transformation (inverting the order of the factors). In \cite{dIJ} we considered UL and LU stochastic factorizations of $P$. That means that each of the factors represents either a pure-birth or a pure-death chain. We consider here a new approach consisting of having one of the factors as a \emph{reflecting} birth-death chain on $\ZZ$ from the state $0$ and the other factor as an \emph{absorbing} birth-death chain on $\ZZ$ to the state $0$ (see \eqref{PRR} and \eqref{PAA} below). We will name them RA or AR factorizations, respectively. In the case of birth-death chains on the nonnegative integers $\ZZ_{\geq0}$ (see \cite{GdI3}) there is no difference between UL (LU) stochastic factorizations and RA (AR) factorizations, but when the state space is $\ZZ$ these factorizations represent different chains. The main motivation for these stochastic factorizations is to divide the probabilistic model into two different and simpler experiments, and combine them together to obtain a simpler description of the original probabilistic model (see applications to urn models in \cite{GdI3, GdI4}). 

The importance of having the spectral matrices is that it is easy to analyze the corresponding Markov chains in terms of polynomials which arise as a solution of the eigenvalue equation. For the case of birth-death chains on $\mathbb{Z}_{\geq0}$ this was first done in a series of papers by S. Karlin and J. McGregor in the 1950s (see \cite{KMc2, KMc3, KMc6}). Apart from an explicit expression of the $n$-step transition probabilities and the invariant measure, it is possible to study some other probabilistic properties using spectral methods such as recurrence, absorbing times, first return times or limit theorems. In the last section of \cite{KMc6} one can find the first attempt to perform the spectral analysis of a discrete-time birth-death chain on $\ZZ$ using orthogonal polynomials. We will recall this approach in Section \ref{sec3}. After that, apart from \cite{Ber}, there are not so many references concerning the spectral analysis of doubly infinite tridiagonal (or Jacobi) operators acting on $\ell_{\pi}^2(\ZZ)$. In \cite{Pru}, W.E. Pruitt studied the case of bilateral birth-and-death processes. An example of this approach can be found in the last section of \cite{ILMV}. For a more theoretical work about the spectral theory of doubly infinite tridiagonal operators see \cite{MR, DIW}.

In Section \ref{secRA} we analyze the conditions under which we can perform a stochastic RA factorization of the form $P=P_RP_A$. There are two important differences with our previous work in \cite{dIJ}. First, we will have now \emph{two free parameters}, while in the case of UL or LU factorizations there was only one. As in \cite{dIJ} (see also \cite{GdI3}) we will show that these two free parameters have to be bounded from below by certain continued fractions if we want to guarantee that the factors are still stochastic matrices. Second, after relabeling the states, $P$ will be equivalent to a semi-infinite $2\times2$ block tridiagonal matrix $\bm P$ (see \eqref{label} and \eqref{P2} below). The RA factorization of $P$ is not a UL factorization, but after relabeling, the corresponding matrix factorization of $\bm P$, denoted by $\bm P=\bm P_R\bm P_A$ (see \eqref{BPRA} below), \emph{will be a UL block matrix factorization}. However, in \cite{dIJ}, the corresponding matrix factorization of $\bm P$ did not preserve the UL block structure of the original factorization of $P$. This fact will allow us to use techniques from the area of matrix-valued orthogonal polynomials, something that we could not do in \cite{dIJ}. After that we perform a discrete Darboux transformation. The two-parameter family of new matrices $\widetilde{P}=P_AP_R$ are also stochastic but they are not strictly tridiagonal matrices since there will be new probability transitions between the states $1$ and $-1$. Nevertheless the block Darboux transformation $\widetilde{\bm P}=\bm P_A\bm P_R$ will preserve the block tridiagonal structure of $\bm P$. We will obtain a relation between the spectral matrices $\bm\Psi$ and $\widetilde{\bm\Psi}$ associated with $P$ and $\widetilde P$, respectively, even if $\widetilde P$ is not tridiagonal. This relation is given by 
$$
\widetilde{\bm\Psi}(x)=S_0\bm\Psi_{\bm U}(x)S_0^T,
$$ 
for certain constant matrix $S_0$ and $\bm\Psi_{\bm U}(x)$ is a \emph{Geronimus transformation} of the original spectral matrix $\bm\Psi(x)$  (see Theorem \ref{thmp} below). We apply our results to the simplest discrete-time birth-death chain on $\ZZ$ with constant transition probabilities (also known as a random walk).

In Section \ref{secAR} we perform the same analysis but considering a stochastic AR factorization of the form $P=\widetilde{P}_A\widetilde{P}_R$. The first thing we realize is that the case where $P$ is a birth-death chain on $\ZZ$ is not interesting since both factors $\widetilde{P}_A$ and $\widetilde{P}_R$ will be splited into two separated birth-death chains at the state $0$. Therefore it will be better to start with an ``almost'' birth-death chain similar to the one mentioned in the previous paragraph, i.e., with extra probability transitions between the states $1$ and $-1$. These transition probabilities must be related with the nearest transition probabilities between the $-1, 0$ and $1$ states (see \eqref{conddd} below). Although now $P$ is not tridiagonal, the equivalent block matrix $\bm P$ after relabeling will be a block tridiagonal matrix and the corresponding matrix factorization of $\bm P$, denoted by $\bm P=\widetilde{\bm P}_A\widetilde{\bm P}_R$, \emph{will be a LU block matrix factorization}. We then analyze under what conditions we get a stochastic AR factorization and show that we also need some bounds related with certain continued fractions. An important difference in this case is that the stochastic factorization, if possible, is \emph{unique} and there will be no extra free parameters. After that we consider the Darboux transformation $\widehat{P}=\widetilde{P}_R\widetilde{P}_A$ which it is now a tridiagonal matrix, i.e., a birth-death chain on $\ZZ$. Similarly the Darboux transformation $\widehat{\bm P}=\widetilde{\bm P}_R\widetilde{\bm P}_A$ will preserve the block tridiagonal structure of $\bm P$. Again we will obtain a relation between the spectral matrices $\bm\Psi$ and $\widehat{\bm\Psi}$ associated with $P$ and $\widehat P$, respectively. This relation is given by a \emph{Christoffel transformation} of $\bm\Psi(x)$ of the form 
$$
\widehat{\bm\Psi}(x)=x\widetilde{S}_0^{-1}\bm\Psi(x)\widetilde{S}_0^{-1},
$$
for certain constant matrix $\widetilde{S}_0$  (see Theorem \ref{thmp2} below). Finally we apply our results to an ``almost'' birth-death chain with constant transition probabilities. The main difficulty now is to compute the spectral matrix $\bm\Psi(x)$. However, using the block tridiagonal structure of $\bm P$, we will be able to use some results of the theory of matrix-valued orthogonal polynomials to obtain an explicit expression of the spectral matrix.

\section{Reflecting-absorbing factorization}\label{secRA}

Let $\{X_t : t=0,1,\ldots\}$ be an irreducible discrete-time birth-death chain on the integers $\ZZ$ with transition probability matrix given by
\begin{equation}\label{P1}
P=\left(
\begin{array}{ccc|cccc}
\ddots&\ddots&\ddots&&&\\
&c_{-1}&b_{-1}&a_{-1}&&&\\
\hline
&&c_0&b_0&a_0&&\\
&&&c_1&b_1&a_1&\\
&&&&\ddots&\ddots&\ddots
\end{array}
\right).
\end{equation}
Since the birth-death chain is irreducible, i.e., it is possible to get to any state from any state, then $0<a_n,c_n<1, n\in\ZZ$. Also, since $P$ is stochastic, it has nonnegative entries and 
$$
c_n+b_n+a_n=1,\quad n\in\ZZ.
$$
A diagram of the transitions between the states is given by
\begin{figure}[h]
\centering
\includegraphics[scale=0.25]{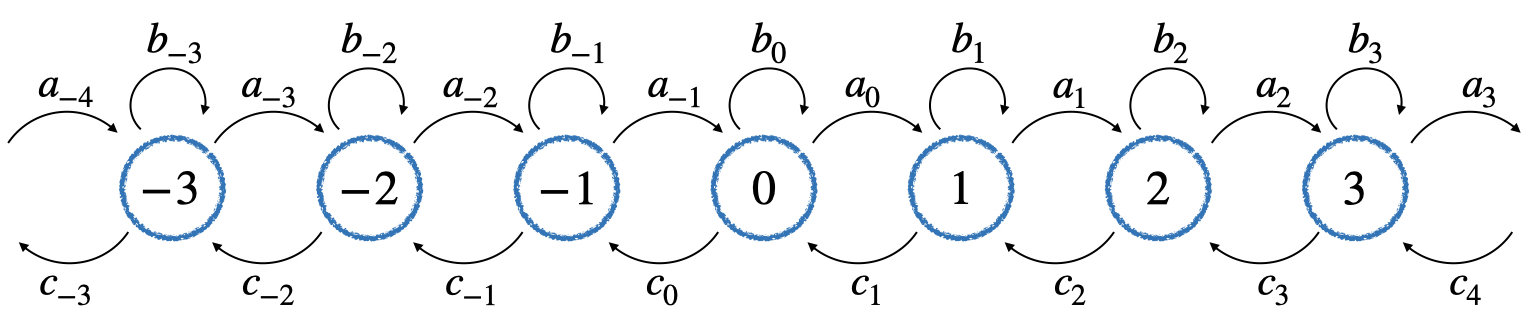}
\end{figure}
It is possible to relabel the states in such a way that all the information of $P$ is collected in a semi-infinite block tridiagonal matrix $\bm P $ with blocks of size $ 2 \times2 $. Indeed, after the new labeling
\begin{equation}\label{label}
\{0,1,2,\ldots\}\to\{0,2,4,\ldots\},\quad\mbox{and}\quad\{-1,-2,-3,\ldots\}\to\{1,3,5,\ldots\},
\end{equation}
we have that $P$ (doubly infinite tridiagonal) is equivalent to a semi-infinite $2\times2$ block tridiagonal matrix $\bm P$ of the form
\begin{equation}\label{P2}
\bm P=
\left(
\begin{array}{ccccccccccccc}
b_0&c_0&\temp &\hspace{-0.3cm} a_0&0&\temp & & & \\
a_{-1}&b_{-1}&\temp &\hspace{-0.3cm}0&c_{-1}&\temp & & & \\
\cline{1-8}
c_1&0&\temp &\hspace{-0.3cm}b_1&0&\temp &\hspace{-0.3cm} a_1&0&\temp& & \\
0&a_{-2}&\temp &\hspace{-0.3cm}0&b_{-2}&\temp &\hspace{-0.3cm}0&c_{-2}&\temp& & \\
\cline{1-11}
&&\temp &\hspace{-0.3cm}c_2&0&\temp &\hspace{-0.3cm}b_2&0&\temp&\hspace{-0.3cm} a_2 &0&\temp \\
&&\temp &\hspace{-0.3cm}0&a_{-3}&\temp &\hspace{-0.3cm}0&b_{-3}&\temp&\hspace{-0.3cm} 0 & c_{-3}&\temp \\
\cline{3-13}
&&&&&\temp &\hspace{-0.3cm}\ddots&&\temp &\hspace{-0.3cm} \ddots&&\temp&\hspace{-0.3cm} \ddots \\
\end{array}
\right)=\begin{pmatrix}
B_0&A_0&&&\\
C_1&B_1&A_1&&\\
&C_2&B_2&A_2&\\
&&\ddots&\ddots&\ddots
\end{pmatrix},
\end{equation}
where
\begin{equation}\label{ABC}
\begin{split}
B_0&=\begin{pmatrix} b_0 & c_0\\a_{-1} & b_{-1}\end{pmatrix},\quad B_n=\begin{pmatrix} b_n & 0\\ 0 & b_{-n-1}\end{pmatrix},\quad n\geq1,\\
A_n&=\begin{pmatrix} a_n & 0\\ 0 & c_{-n-1}\end{pmatrix},\quad n\geq0,\quad C_n=\begin{pmatrix} c_n & 0\\ 0 & a_{-n-1}\end{pmatrix},\quad n\geq1.
\end{split}
\end{equation}
The Markov chain generated by $\bm P $ takes values in the two-dimensional state space $\ZZ_{\geq0}\times \{1,2 \}. $ These type of processes are called discrete-time \emph {quasi-birth-and-death processes}. In general these processes allow transitions between all two-dimensional adjacent states (see \cite{LaR, Neu} for a general reference). The spectral analysis of these processes has been considered for instance in \cite{DRSZ, G2, G1, GdI2, dIR}. 

In \cite{dIJ} we considered stochastic UL and LU factorizations of $P$, which means that each of the factors can be viewed as a pure-birth or a pure-death chain, corresponding to the U or L matrix in the factorization, respectively. We consider here a different approach, given by a stochastic factorization of $P$ where the first factor is a \emph{reflecting} birth-death chain from the state 0 and the second factor is an \emph{absorbing} birth-death chain to the state 0. From now on we will call this factorization an \emph{RA factorization}. Therefore we are looking for a stochastic factorization of the transition probability matrix $P$ in \eqref{P1} of the form $P=P_RP_A$ where
\begin{equation}\label{PRR}
P_R=\left(
\begin{array}{cccc|cccc}
\ddots&\ddots&\ddots&&&\\
&x_{-2}&y_{-2}&0&&&&\\
&&x_{-1}&y_{-1}&0&&&\\
\hline
&&&\alpha&y_0&x_0&&\\
&&&&0&y_1&x_1&\\
&&&&&\ddots&\ddots&\ddots
\end{array}
\right),
\end{equation}
and
\begin{equation}\label{PAA}
P_A=\left(
\begin{array}{cccc|cccc}
\ddots&\ddots&\ddots&&&\\
&0&s_{-2}&r_{-2}&&&&\\
&&0&s_{-1}&r_{-1}&&&\\
\hline
&&&0&1&0&&\\
&&&&r_1&s_1&0&\\
&&&&&\ddots&\ddots&\ddots
\end{array}
\right).
\end{equation}
Observe that we have to add a new probability $\alpha$ in \eqref{PRR} in order to connect the reflecting birth-death chain from the state $0$ to the state $-1$. Also the state $0$ is an absorbing state in the chain \eqref{PAA}. Diagrams of the possible transitions between the states of both birth-death chains are given by
\begin{figure}[h]
\centering
\includegraphics[scale=0.4]{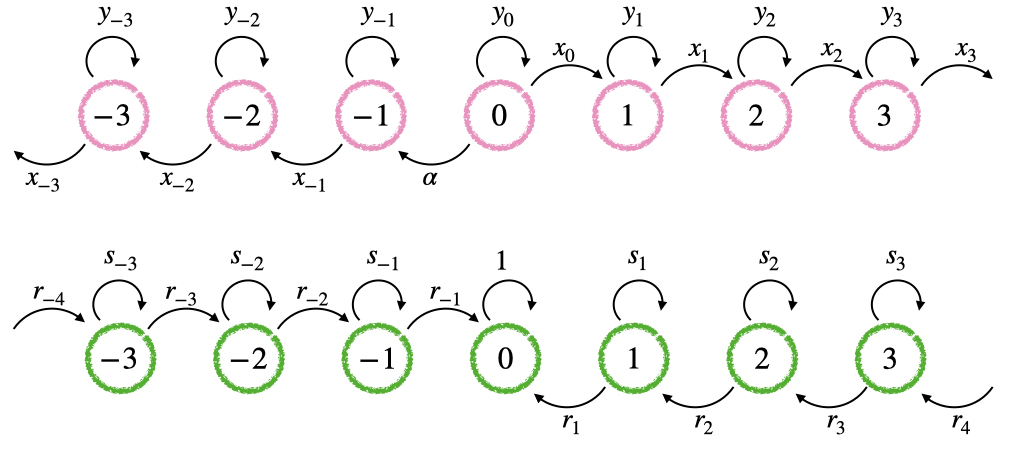}
\end{figure}

If we relabel the states as in \eqref{label}, then the matrix $P_R$ is equivalent to a semi-infinite $2\times2$ \emph{upper} block matrix $\bm P_R$ of the form
\begin{equation*}\label{PR}
\bm P_R=
\left(
\begin{array}{ccccccccccc}
y_0&\alpha&\temp &\hspace{-0.3cm} x_0&0&\temp & & & \\
0&y_{-1}&\temp &\hspace{-0.3cm}0&x_{-1}&\temp & & & \\
\cline{1-8}
&&\temp &\hspace{-0.3cm}y_1&0&\temp &\hspace{-0.3cm} x_1&0&\temp& & \\
&&\temp &\hspace{-0.3cm}0&y_{-2}&\temp &\hspace{-0.3cm}0&x_{-2}&\temp& & \\
\cline{3-11}
&&&&&\temp &\hspace{-0.3cm}\ddots&&\temp &\hspace{-0.3cm} \ddots& \\
\end{array}
\right),
\end{equation*}
while $P_A$ is equivalent to a semi-infinite $2\times2$ \emph{lower} block matrix $\bm P_A$ of the form
\begin{equation*}\label{PA}
\bm P_A=
\left(
\begin{array}{cccccccc}
1&0&\temp &\hspace{-0.3cm}&&    \\
r_{-1}&s_{-1}&\temp &\hspace{-0.3cm}&&    \\
\cline{1-5}
r_1&0&\temp &\hspace{-0.3cm}s_1&0&\temp &\hspace{-0.3cm}&  \\
0&r_{-2}&\temp &\hspace{-0.3cm}0&s_{-2}&\temp &\hspace{-0.3cm}&  \\
\cline{1-8}
&&\temp &\hspace{-0.3cm}\ddots&&\temp &\hspace{-0.3cm} \ddots& \\
\end{array}
\right).
\end{equation*}
If we call
\begin{equation}\label{XYSR}
\begin{split}
Y_0&=\begin{pmatrix}
y_0&\alpha\\
0&y_{-1}
\end{pmatrix},\quad
Y_n=\begin{pmatrix}
y_n&0\\
0&y_{-n-1}
\end{pmatrix}, \quad n \ge 1, \quad 
X_n=\begin{pmatrix}
x_n&0\\
0&x_{-n-1}
\end{pmatrix}, \quad n \ge 0,\\
S_0&=\begin{pmatrix}
1&0\\
r_{-1}&s_{-1}
\end{pmatrix},\quad
S_n=\begin{pmatrix}
s_n&0\\
0&s_{-n-1}
\end{pmatrix}, \quad n \ge 1, \quad 
R_n=\begin{pmatrix}
r_n&0\\
0&r_{-n-1}
\end{pmatrix}, \quad n \ge 0, 
\end{split}
\end{equation}
then we can write $\bm P_R$ and $\bm P_A$ as
\begin{equation}\label{BPRA}
\bm P_R=\left(
\begin{array}{ccccc}
Y_0&X_0&&\\
 &Y_1&X_1&\\
 & &Y_2&X_2\\
  && &\ddots&\ddots\\
\end{array}
\right), \quad 
\bm P_A=\left(
\begin{array}{cccc}
S_0&&&\\
 R_1&S_1&&\\
  &R_2&S_2&\\
   &&\ddots&\ddots\\
\end{array}
\right).
\end{equation}
The RA factorization $P=P_R P_A$, \emph{which is not a UL factorization}, is equivalent to the block matrix factorization $\bm P= \bm P_R \bm  P_A$, \emph{which is a UL block matrix factorization}. This is one of the main differences between this approach and the one we used in \cite{dIJ}, where we performed a UL factorization of $P$ while, after relabeling, the equivalent factorization for $\bm P$ did not have a UL block structure.

A direct computation from $\bm P= \bm P_R \bm  P_A$ shows that
\begin{equation}\label{ABC-XYSR}
\begin{split}
A_n&=X_nS_{n+1}, \quad n\ge 0,  \\
B_n&=Y_nS_n+X_nR_{n+1}, \quad n\ge0,\\
C_n&=Y_nR_n, \quad n\ge 1,
\end{split}
\end{equation}
or equivalently, using  $P=P_R P_A$, we obtain
\begin{align}
&a_n=x_ns_{n+1}, \quad n\ge 0,\quad a_{-n}=y_{-n}r_{-n},\quad n\ge 1,\label{aa}\\
\nonumber &b_n=y_ns_n+x_nr_{n+1}, \quad n\ge 1, \quad b_0=y_0+x_0r_1+\alpha r_{-1},\quad b_{-n}=y_{-n}s_{-n}+x_{-n}r_{-n-1},\quad n\ge 1,\\
&c_n=y_nr_n, \quad n\ge 1, \quad c_0=\alpha s_{-1}, \quad c_{-n}=x_{-n}s_{-n-1},\quad n\ge 1\label{cc}.
\end{align}
Now we will see under what conditions we have that $P_R$ and $P_A$ are also stochastic matrices, i.e.,  all entries are nonnegative and
\begin{equation}\label{x0s}
\alpha+x_0+y_0=1,\quad x_n+y_n=1, \quad s_n+r_n=1, \quad n\in\ZZ \setminus \{0\}.
\end{equation}
If we fix $\alpha$ then we can compute $s_{-1}$, $r_{-1}$, $y_{-1}$, $x_{-1}$, $s_{-2},\ldots$ recursively using \eqref{aa}, \eqref{x0s} and \eqref{cc}. On the other hand, for positive values of the indices, we need to fix a second parameter, say $x_0$, in order to obtain recursively $s_1$, $r_1$, $y_1$, $x_1$, $s_2,\ldots$ using again \eqref{aa}, \eqref{x0s} and \eqref{cc}. Using the same arguments as in \cite{dIJ} we will see under what conditions on the free parameters $\alpha$ and $x_0$ we have that both matrices $P_R$ and $P_A$ are also stochastic matrices. For that consider $H$ and $H'$ the continued fractions 
\begin{equation}\label{ccff}
H=\cFrac{a_0}{1}-\cFrac{c_1}{1}-\cFrac{a_1}{1}-\cFrac{c_2}{1}-\cdots,\quad H'=\cFrac{c_0}{1}-\cFrac{a_{-1}}{1}-\cFrac{c_{-1}}{1}-\cFrac{a_{-2}}{1}-\cdots
\end{equation}
For each continued fraction, consider the corresponding sequence of convergents
$(h_{n})_{n\geq0}$ and $(h_{-n}')_{n\geq0}$, given by
\begin{equation}\label{hhh}
h_{n}=\frac{A_n}{B_n},\quad h_{-n}'=\frac{A_{-n}'}{B_{-n}'}.
\end{equation}
We refer to \cite{GdI2, dIJ} to find more information about the notation and definitions on continued fractions.
\begin{proposition}\label{propalfx0}
Let $H$ and $H'$ be the continued fractions defined by \eqref{ccff} and the corresponding convergents $h_n$ and $h_{-n}$ defined by \eqref{hhh}. Assume that 
\begin{equation*}\label{AnBn}
0<A_n<B_n,\quad\mbox{and}\quad0<A_{-n}'<B_{-n}',\quad n\geq1.
\end{equation*}
Then both $H$ and $H'$ are convergent. Moreover, let $P=P_RP_A$ and assume that $H+H'\leq1$. Then, both $P_R$ and $P_A$ are stochastic matrices if and only if we choose $\alpha$ and $x_0$ in the following ranges
\begin{equation*}\label{yy0r}
\alpha \ge H',\quad x_0 \ge H.
\end{equation*}
\end{proposition}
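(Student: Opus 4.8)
The plan is to split the problem into two independent one-sided recursions---one driven by the free parameter $x_0$ and supported on the positive states, the other driven by $\alpha$ and supported on the negative states---together with a single scalar compatibility condition coming from the row of $P_R$ indexed by $0$, and then to control each one-sided recursion by the associated continued fraction exactly as in \cite{dIJ} (see also \cite{GdI3}; for the continued-fraction facts used below, \cite{GdI2, dIJ}). First I would settle convergence: a standard determinant computation shows that under $0<A_n<B_n$ (so $B_n>0$) the convergents $h_n$ are strictly increasing and bounded above by $1$, hence $H$ exists and $H\in(0,1)$; the same argument applied after deleting an initial block shows that every tail $H^{(k)}$ of $H$ converges to a finite positive value, and then $H^{(k)}=p_k/(1-H^{(k+1)})$ forces $H^{(k)}\in(0,1)$ for all $k$. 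The same applies to $H'$.

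Next the reduction. Fixing $\alpha$ and $x_0$ and solving \eqref{aa}, \eqref{x0s} and \eqref{cc} recursively determines every entry of $P_R$ and $P_A$; a one-line computation using $a_n+b_n+c_n=1$ then shows that the relations for the $b_n$'s hold automatically, so $P=P_RP_A$ with both factors stochastic is equivalent to three requirements: (i) the positive recursion produces numbers in $(0,1)$; (ii) the negative recursion does as well; (iii) $y_0:=1-\alpha-x_0\ge0$. Setting $w_0=x_0$ and, alternately, $p_{2k}=a_k$ and $p_{2k+1}=c_{k+1}$, the positive recursion collapses to the single M\"obius iteration $w_{j+1}=1-p_j/w_j$, so that (i) becomes the statement that $w_j>0$ for all $j\ge1$; by construction, running this same iteration from $w_0=H$ reproduces the tails, $w_j=H^{(j)}$. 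The negative recursion has the identical shape, with $w_0=\alpha$, the corresponding interleaving of the $a_{-k}$'s and $c_{-k}$'s, and the continued fraction $H'$.

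I would then prove that (i) holds if and only if $x_0\ge H$. For ``$\Leftarrow$'': since $t\mapsto1-p_j/t$ is increasing on $(0,\infty)$ and each $H^{(j)}\in(0,1)$, an induction gives $H^{(j)}\le w_j<1$ (hence $w_j>0$) for all $j$ whenever $x_0\ge H$. For ``$\Rightarrow$'': assume (i) and fix $n$; substituting repeatedly writes $x_0$ as the finite continued fraction with partial numerators $p_0,\dots,p_{n-1}$ and last denominator $1-w_n$, and comparing its successive partial values $w_{n-1},w_{n-2},\dots,w_0$ (all in $(0,1)$ by (i)) with those of the ordinary convergent $h_{n-1}$ (last denominator $1$), a downward induction that uses only $w_i<1$ yields $h_{n-1}<x_0$; letting $n\to\infty$ gives $x_0\ge\lim_n h_{n-1}=H$. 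Word for word, (ii) holds iff $\alpha\ge H'$. Finally (iii) is the inequality $\alpha+x_0\le1$; together with (i) and (ii) it cuts out a nonempty region precisely when $H+H'\le1$ (and conversely any admissible pair forces $H+H'\le\alpha+x_0\le1$), which is exactly why $H+H'\le1$ is assumed. Assembling (i)--(iii) gives the claimed equivalence.

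The step I expect to carry the real weight is this ``$\Rightarrow$'' direction, i.e., showing that launching the M\"obius iteration strictly below $H$ necessarily drives some entry out of $(0,1)$ within finitely many steps---equivalently, that $H$ is the smallest admissible $x_0$ and $H'$ the smallest admissible $\alpha$. The monotone-unfolding argument sketched above does this, but one has to keep every intermediate evaluation inside the half-line $(-\infty,1)$ on which the maps $t\mapsto p_i/(1-t)$ are order-preserving, and to treat the borderline case where the continued fraction converges only ``parabolically''. Since this is precisely the analysis carried out in \cite{dIJ} (and in \cite{GdI3} on $\ZZ_{\geq0}$), once the second step has recast both one-sided problems in standard continued-fraction form the estimates there may be quoted directly.
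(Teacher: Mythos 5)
Your proof is correct and follows essentially the same route as the paper, which omits the argument and defers to Theorem 2.1 of \cite{dIJ} (see also \cite{GdI3}): decoupling into two one-sided recursions driven by $x_0$ and $\alpha$, recasting each as a M\"obius iteration compared against the tails of $H$ and $H'$, and imposing the row-sum constraint $y_0=1-\alpha-x_0\ge0$, which is where $H+H'\le1$ enters. Your explicit bookkeeping of the condition $\alpha+x_0\le1$ is, if anything, slightly more careful than the proposition's literal statement (the paper only makes this constraint explicit later, in the random-walk example).
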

\begin{proof}
The proof follows the same lines as the proof of Theorem 2.1 of \cite{dIJ} and will be omitted (see also Theorem 2.1 of \cite{GdI3}).
\end{proof}


\subsection{Stochastic Darboux transformation and the associated spectral matrix}\label{sec3}

We have shown above under what conditions a doubly stochastic matrix $P$ like in \eqref{P1} can be decomposed as an RA factorization, where both factors are still stochastic matrices. This factorization comes with \emph{two free parameters}. Once we have this factorization it is possible to perform what is called as a \emph{discrete Darboux transformation}, consisting of inverting the order of the factors. The Darboux transformation has a long history but probably the first reference of a discrete Darboux transformation like we study here appeared in \cite{MS} in connection with the Toda lattice.

If $P=P_RP_A$ as in \eqref{PRR} and \eqref{PAA}, then, by inverting the order of the factors, we obtain another stochastic matrix of the form $\widetilde P=P_AP_R$, since the multiplication of two stochastic matrices is again a stochastic matrix. This matrix $\widetilde P$ \emph{is not tridiagonal} but \emph{pentadiagonal}, so we do not obtain again a birth-death chain on $\ZZ$, as in the case of UL (or LU) factorization in \cite{dIJ}. However we obtain a two-parameter family of new Markov chains on $\ZZ$, which we denote by $\{\widetilde X_t: t=0,1,\ldots\}$, which is ``almost'' a birth-death chain. The only difference is that we have new transition probabilities between the state $1$ to the state $-1$ and viceversa. The tridiagonal coefficients of $\widetilde P$ are given by
\begin{align}
\nonumber\tilde{a}_n&=s_nx_{n}, \quad\tilde{a}_{-n-1}=y_{-n}r_{-n-1},\quad n\geq0,\\
\label{DTabc}\tilde b_0&=y_0,\quad \tilde b_{-1}=r_{-1}\alpha+s_{-1}y_{-1},\quad \tilde{b}_n=r_nx_{n-1}+s_ny_n,\quad\tilde{b}_{-n}=r_{-n}x_{-n+1}+s_{-n}y_{-n},\quad n\geq1,\\
\nonumber\tilde{c}_0&=\alpha,\quad \tilde{c}_n=r_ny_{n-1},\quad\tilde{c}_{-n}=s_{-n}x_{-n},\quad n\geq1,
\end{align}
while the probability transitions between  the state $1$ and $-1$ are given by
\begin{equation}\label{DTdd}
\tilde d_{1}=\PP(\widetilde X_1=-1 | \widetilde X_0=1)=r_1\alpha,\quad \tilde d_{-1}=\PP(\widetilde X_1=1 | \widetilde X_0=-1)=r_{-1}x_0.
\end{equation}
A diagram of the transitions between the states of this new family of Markov chains is given by

\begin{figure}[h]
\centering
\includegraphics[scale=0.4]{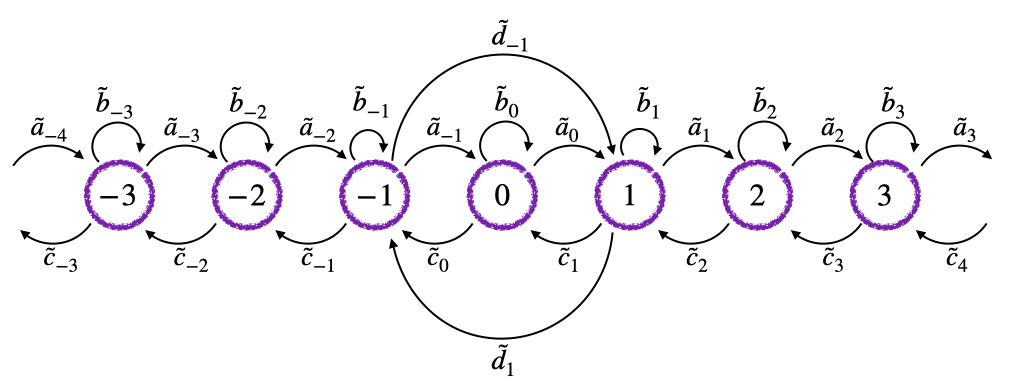}
\end{figure}

Another way to obtain these coefficients is by considering the block matrix factorization $\bm P= \bm P_R \bm  P_A$, where $\bm P$ is given by \eqref{P2} (see also \eqref{ABC}) and $\bm P_R$ and $\bm P_A$ are given by \eqref{BPRA} (see also \eqref{XYSR}). The Darboux transformation will be given by $\bm{\widetilde P}=\bm P_A \bm  P_R$, i.e.,
\begin{equation}\label{tildep}
\bm{\widetilde P}=\begin{pmatrix}
\widetilde B_0 & \widetilde A_0 & & \\
\widetilde  C_1& \widetilde B_1& \widetilde A_1& \\
& \widetilde C_2& \widetilde B_2 & \widetilde A_2\\
& &\ddots &\ddots&\ddots \\
\end{pmatrix}=\begin{pmatrix}
S_0 & & & \\
R_1& S_1&& \\
& R_2 & S_2 &\\
& & \ddots &\ddots \\
\end{pmatrix}
\begin{pmatrix}
Y_0 &X_0 & & \\
& Y_1&X_1& \\
& & Y_2 & X_2\\
& & &\ddots&\ddots \\
\end{pmatrix}.
\end{equation}
A direct computation shows
\begin{equation}\label{DTABC1}
\begin{split}
\widetilde A_n&= S_nX_n, \quad n\ge 0, \\
\widetilde B_0&=S_0 Y_0, \quad \widetilde B_n=R_nX_{n-1}+S_nY_n, \quad n\ge 1,\\
\widetilde C_n&=R_nY_{n-1}, \quad n\ge 0. \end{split}
\end{equation}
Using the notation in \eqref{ABC} we can obtain again the probabilities \eqref{DTabc} and \eqref{DTdd}. Since $\bm P= \bm P_R \bm  P_A$ is a block UL matrix factorization then $\bm{\widetilde P}=\bm P_A \bm  P_R$ will be also a block tridiagonal matrix, and will be again a family of discrete-time quasi-birth-and-death processes.

Now let us focus on the following question: given the spectrum of the doubly infinite matrix $P$, how can we compute the spectrum of the Darboux transformation $\widetilde{P}$ of the RA factorization? We will see that this will be related with what is called a \emph{Geronimus transformation}. Before that let us introduce some notation to study the spectral measures associated with the original birth-death chain $P$ on $\ZZ$.

We will follow the last section of \cite{KMc6} (see also \cite{dIJ}). For $P$ like in \eqref{P1} consider the eigenvalue equation $ x q^\alpha(x) = P q^\alpha(x) $ where $q^\alpha(x)=(\cdots, Q_{-1}^\alpha(x),Q_0^\alpha(x),Q_1^\alpha(x),\cdots)^T$, $ \alpha = 1, 2$. For each $ x $ real or complex there exist two polynomial families of linearly independent solutions $ Q_n ^ {\alpha} (x)$, $\alpha = 1, 2, n \in \ZZ , $ depending on the initial values at $n = 0 $ and $ n = -1 $. These polynomials are given by
\begin{align}
\nonumber Q_0^1(x)&=1,\quad Q_{0}^2(x)=0,\\
\label{TTRRZ}Q_{-1}^1(x)&=0,\quad Q_{-1}^2(x)=1,\\
\nonumber xQ_n^{\alpha}(x)&=a_nQ_{n+1}^{\alpha}(x)+b_nQ_n^{\alpha}(x)+c_nQ_{n-1}^{\alpha}(x),\quad n\in\ZZ,\quad\alpha=1,2.
\end{align}
Observe that 
\begin{equation*}\label{degqs}
\begin{split}
\deg(Q_n^1)&=n,\quad n\geq0,\hspace{.95cm}\deg(Q_n^2)=n-1,\quad n\geq1,\\
\deg(Q_{-n-1}^1)&=n-1,\quad n\geq1,\quad\deg(Q_{-n-1}^2)=n,\quad n\geq0.
\end{split}
\end{equation*}
Let us define the \emph{potential coefficients} as
\begin{equation}\label{potcoeff}
\pi_0=1,\quad \pi_n=\frac{a_0a_1\cdots a_{n-1}}{c_1c_2\cdots c_n},\quad \pi_{-n}=\frac{c_0c_{-1}\cdots c_{-n+1}}{a_{-1}a_{-2}\cdots a_{-n}},\quad n\geq1.
\end{equation}
These coefficients are defined as the solutions of the \emph{symmetry equations} $P_{ij}\pi_i=P_{ji}\pi_j$ normalized by the condition $\pi_0=1$. In particular we have that $\pi P=\pi$, i.e., $\pi=(\pi_n)_{n\in\ZZ}$ is an invariant vector of $P$. As a consequence of these symmetry equations, the matrix $P$ gives rise to a self-adjoint operator of norm $\leq1$ in the Hilbert space $\ell^2_{\pi}(\ZZ)$, which we will denote by $P$, abusing the notation. Applying the spectral theorem we obtain three unique measures $\psi_{\alpha\beta}, \alpha,\beta=1,2$ ($\psi_{12}=\psi_{21}$), which are supported on the interval $[-1,1]$ such that we have the following orthogonality relation
\begin{equation}\label{ortoZ}
\sum_{\alpha,\beta=1}^{2}\int_{-1}^{1}Q_i^{\alpha}(x)Q_j^{\beta}(x)d\psi_{\alpha\beta}(x)=\frac{\delta_{i,j}}{\pi_j},\quad i,j\in\ZZ.
\end{equation}
For more details see \cite{dIJ, KMc6, MR}. The measures $\psi_{11}$ and $\psi_{22}$ are positive (in fact $\psi_{11}$ is a probability measure but $\psi_{22}$ is not, since $\int_{-1}^1d\psi_{22}(x)=1/\pi_{-1}$). The measure $\psi_{12}$ is a signed measure satisfying $0=\int_{-1}^1d\psi_{12}(x)$. For simplicity let us assume that the three measures are continuously differentiable with respect to the Lebesgue measure, i.e., $d\psi_{\alpha\beta}(x)=\psi_{\alpha\beta}(x)dx$, $\alpha, \beta= 1, 2$, abusing the notation. These 3 measures can be written in matrix form as the $2\times2$ matrix
\begin{equation}\label{2spmt}
\bm\Psi(x)=\begin{pmatrix} \psi_{11}(x) & \psi_{12}(x)\\ \psi_{12}(x) & \psi_{22}(x) \end{pmatrix},
\end{equation}
so that the orthogonality relation \eqref{ortoZ} can be written in matrix form as
\begin{equation*}\label{ortoZ2}
\int_{-1}^{1}\left(Q_i^1(x),Q_i^2(x)\right)\bm\Psi(x)\begin{pmatrix}Q_j^1(x)\\Q_j^2(x)\end{pmatrix}dx=\frac{\delta_{i,j}}{\pi_j},\quad i,j\in\ZZ.
\end{equation*}
The matrix $\bm\Psi(x)$ in \eqref{2spmt} is called the \emph{spectral matrix} associated with $P$. The orthogonality conditions are valid for any indexes $ i, j \in \ZZ$.  With this information we can compute the $n$-step transition probabilities of the random walk $\{X_t : t=0,1,\ldots\}$, given by the so-called \emph{Karlin-McGregor integral representation formula} (see \cite{KMc6})
\begin{equation*}\label{KmcG1}
P_{ij}^{(n)}\doteq\mathbb{P}(X_n=j \; | X_0=i)=\pi_j\int_{-1}^{1}x^n\left(Q_i^1(x),Q_i^2(x)\right)\bm\Psi(x)\begin{pmatrix}Q_j^1(x)\\Q_j^2(x)\end{pmatrix}dx,\quad i,j\in\ZZ.
\end{equation*}

If we define the matrix-valued polynomials
\begin{equation}\label{2QMM}
\bm Q_n(x)=\begin{pmatrix} Q_n^1(x) & Q_n^2(x) \\ Q_{-n-1}^1(x) & Q_{-n-1}^2(x)\end{pmatrix},\quad n\geq0,
\end{equation}
then we have
\begin{equation}\label{ttrrq}
\begin{split}
x\bm Q_0(x)&=A_0\bm Q_{1}(x)+B_0\bm Q_0(x),\quad \bm Q_0(x)=I_{2\times2},\\
x\bm Q_n(x)&=A_n\bm Q_{n+1}(x)+B_n\bm Q_n(x)+C_n\bm Q_{n-1}(x),\quad n\geq1,
\end{split} 
\end{equation}
where $I_{2\times2}$ denotes the $2\times2$ identity matrix and $(A_n)_{n\geq0}$, $(B_n)_{n\geq0}$ and $(C_n)_{n\geq1}$ are given by \eqref{ABC}. If we denote $\bm{Q}=(\bm{Q}_0^T,\bm{Q}_1^T,\cdots)^T$ then we have $x\bm{Q}=\bm{P}\bm{Q}$, where $\bm P$ is given by \eqref{P2}. The matrix orthogonality is defined in terms of the (matrix-valued) inner product
\begin{equation*}\label{ortoq}
\int_{-1}^{1}\bm Q_n(x)\bm\Psi(x)\bm Q_m^T(x)dx=\Pi_n^{-1}\delta_{nm},
\end{equation*}
where $A^T$ is the transpose of a matrix $A$ and 
\begin{equation}\label{potcoeffm}
\Pi_n=\begin{pmatrix} \pi_n & 0 \\ 0 & \pi_{-n-1}\end{pmatrix},\quad n\geq0,
\end{equation}
where $\pi=(\pi_n)_{n\in\ZZ}$ are given by \eqref{potcoeff}. An alternative way of writing $\Pi_n$, which solves the symmetry equations for $\bm P$,  is given by (see \cite{dI1})
\begin{equation*}\label{pimatrix}
\Pi_n=(C_1^T\cdots C_n^T)^{-1}\Pi_0 A_0 \cdots A_{n-1}, \quad n\ge 1.
\end{equation*}
Therefore we have (see \cite{DRSZ, G2}) the Karlin-McGregor integral representation formula where the $2\times2$ block entry $(i,j)$ is given by
\begin{equation*}
\bm P_{ij}^{(n)}=\left(\int_{-1}^{1}x^n\bm Q_i(x)\bm\Psi(x)\bm Q_j^T(x)dx\right)\Pi_j,\quad i,j\in\ZZ_{\geq0}.
\end{equation*}

\begin{remark}
In several references (see for instance \cite{DIW, MR, Pru}) the two families of polynomials $(Q_n^\alpha)_{n\in\ZZ},\alpha=1,2,$ generated by the three-term recurrence relation \eqref{TTRRZ} are denoted in a different way and have different initial conditions. If we denote by $(P_n^\beta)_{n\in\ZZ}$ the families defined in those papers, then the initial conditions are given by $P_n^\beta=\delta_{n \beta}$ for $n,\beta=0,1$. We prefer to use our notation (and follow \cite{KMc6} or \cite{Ber}) since in this case, and after relabeling, the sequence $(\bm Q_n)_{n\geq0}$ in \eqref{2QMM} is a proper family of matrix-valued polynomials, i.e., $\deg\bm Q_n=n$ and have nonsingular leading coefficient, something that will not occur if we perform the same labeling to the family $(P_n^\beta)_{n\in\ZZ}$. In this way we make a direct connection between the theory of orthogonal polynomials on the integers and the theory of $2\times2$ matrix-valued orthogonal polynomials.

\end{remark}

\medskip

Consider now the matrix-valued polynomials
\begin{equation} \label{relacionqhat-q}
\begin{split}
\bm{U}_0(x)&= S_0 \bm Q_0(x)=S_0, \\
\bm{U}_n(x)&=R_n \bm Q_{n-1}(x)+S_n \bm Q_n(x), \quad n \ge 1,
\end{split}
\end{equation}
where $(S_n)_{n\geq0}$ and $(R_n)_{n\geq1}$ are defined by \eqref{XYSR}. If we denote $\bm{U}=(\bm{U}_0^T,\bm{U}_1^T,\cdots)^T$, then we have that $\bm{U}=\bm{P}_A\bm{Q}$, where $\bm{P}_A$ is given by \eqref{BPRA}. Therefore, from the RA factorization of $\bm P$, we get $\bm P_R\bm{U}= \bm P_R \bm P_A \bm Q= \bm P\bm Q=x\bm Q,$
or in other words
\begin{equation}\label{relacionq-qhat}
x\bm Q_n(x)=Y_n \bm{U}_n(x)+X_n\bm{U}_{n+1}(x), \quad n\ge 0.
\end{equation}
Finally, using the Darboux transformation \eqref{tildep}, we have
\begin{equation*}\label{mfsalf}
\widetilde{\bm{P}}\bm{U} = \bm P_A\bm P_R \bm P_A\bm{Q} = x\bm P_A\bm{Q} = x\bm U.
\end{equation*}
Evaluating at $x=0$ in \eqref{relacionq-qhat} we can easily get
\begin{equation}\label{qhatin0}
\bm{U}_n(0)=(-1)^{n}X_{n-1}^{-1}Y_{n-1}\cdots X_0^{-1}Y_0S_0.
\end{equation}
As a consequence we also have
\begin{equation*}\label{hatqnqn-k}
\bm{U}_{n+1}(0)\bm{U}_{n-k}^{-1}(0)=(-1)^{k+1}X_{n}^{-1}Y_n \cdots X_{n-k}^{-1}Y_{n-k}, \quad k=0,1,\ldots, n.
\end{equation*}
We can also solve \eqref{relacionq-qhat} recursively in which case we have, using the previous notation,
\begin{equation}\label{qhatsuma}
\bm{U}_n(x)=\bm{U}_n(0)\left[ I+x\sum_{k=0}^{n-1} \bm{U}_{k+1}^{-1}(0) X_k^{-1}\bm Q_k(x)\right].
\end{equation}
From \eqref{relacionqhat-q} we have that $\mbox{deg}(\bm{U}_n(x))=n, n\geq0,$ but $\bm{U}_0(x)=S_0\neq I_{2\times 2}$. We will be interested in a new family of matrix-valued polynomials $(\widetilde{\bm{Q}}_n)_{n\geq0}$ where $\widetilde{\bm{Q}}_0=I_{2\times 2}$. Since $S_0$ is a constant matrix and has an inverse, this new family can be defined as
\begin{equation}\label{qtilde-qhat}
\bm{\widetilde Q}_n(x)=\bm{U}_n(x)S_0^{-1}.
\end{equation}
Finally, if we define $(\widetilde \Pi_n)_{n\geq0}$ as the solution of the symmetry equations for $\bm{\widetilde P}$ in \eqref{tildep}, given by 
\begin{equation*}
\widetilde \Pi_n=(\widetilde C_1^T\cdots \widetilde C_n^T)^{-1} \widetilde  \Pi_0 \widetilde  A_0 \cdots \widetilde  A_{n-1}, \quad n\ge 1,
\end{equation*}
then, using \eqref{ABC-XYSR}, \eqref{DTABC1} and the previous equation, we get
\begin{equation}\label{coefpot1}
\widetilde \Pi_n=Y_n^{T} \Pi_n S_n^{-1}, \quad n\ge 0.
\end{equation}
Computing $\widetilde\Pi_0$ using \eqref{aa} and \eqref{cc}, we get
\begin{equation*}
\widetilde \Pi_0=\begin{pmatrix}  y_0&0\\ 0& \alpha/r_{-1} \end{pmatrix}.
\end{equation*}
Therefore $(\widetilde \Pi_n)_{n\geq0}$ are always \emph{diagonal matrices}. We are ready to prove the main result of this section.
\begin{theorem}\label{thmp}
Let $\{X_t: t=0, 1, \dots\}$ be the birth-death chain on $\ZZ$ with transition probability matrix $P$ given by \eqref{P1} and $\{\widetilde X_t : t=0, 1, \dots\}$ be the Markov chain generated by the Darboux transformation of $P=P_RP_A$ with transition probabilities given by \eqref{DTabc} and \eqref{DTdd}. Assume that $\bm M_{-1}=\D\int_{-1}^{1} x^{-1}\bm\Psi(x)dx$ is well-defined (entry by entry), where $\bm\Psi(x)$ is the original spectral matrix \eqref{2spmt}. Then the matrix-valued polynomials $(\widetilde{\bm Q}_n)_{n\geq0}$ defined by \eqref{qtilde-qhat} are orthogonal with respect to the following spectral matrix 
\begin{equation}\label{psitilde}
\bm{\widetilde \Psi}(x)=S_0\bm{\Psi_U}(x) S_0^T,
\end{equation}
where the constant matrix $S_0$ is defined by \eqref{XYSR} and
\begin{equation}\label{psihat}
\bm{ \Psi_U}(x)=\frac{\bm \Psi(x)}{x}+\left[ \frac{1}{y_0}\begin{pmatrix} 1&-r_{-1}/s_{-1}\\ -r_{-1}/s_{-1}& (r_{-1}/s_{-1})^2+y_0r_{-1}/\alpha s^2_{-1}\end{pmatrix}-\bm M_{-1}\right]\delta_0(x).
\end{equation}
Moreover, we have
\begin{equation*}\label{qusnorms}
\int_{-1}^{1} \widetilde{\bm Q}_n (x) \widetilde{\bm\Psi }(x) \widetilde{\bm Q}_m^T(x) dx=\widetilde{\Pi}_n^{-1}\delta_{n,m},
\end{equation*}
where $(\widetilde{\Pi}_n)_{n\geq0}$ are defined by \eqref{coefpot1}.
\end{theorem}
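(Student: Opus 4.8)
The plan is to establish directly the orthogonality relation $\int_{-1}^{1}\widetilde{\bm Q}_n(x)\widetilde{\bm\Psi}(x)\widetilde{\bm Q}_m^T(x)\,dx=\widetilde\Pi_n^{-1}\delta_{n,m}$; the first assertion of the theorem then follows because $(\widetilde{\bm Q}_n)_{n\ge0}$ satisfies the block three-term recurrence associated with $\bm{\widetilde P}$ in \eqref{tildep}, with $\widetilde{\bm Q}_0=I_{2\times2}$, $\deg\widetilde{\bm Q}_n=n$ and nonsingular leading coefficients, so this orthogonality fixes the matrix of measures uniquely. Since $\widetilde{\bm Q}_n=\bm U_nS_0^{-1}$ by \eqref{qtilde-qhat} and $\widetilde{\bm\Psi}=S_0\bm{\Psi_U}S_0^T$, the factor $S_0$ cancels and the claim is equivalent to
\[
G_{n,m}:=\int_{-1}^{1}\bm U_n(x)\bm{\Psi_U}(x)\bm U_m^T(x)\,dx=\widetilde\Pi_n^{-1}\delta_{n,m},\qquad n,m\ge0 .
\]
First I would observe that $G_{n,m}$ is well defined: splitting $\bm U_k(x)=\bm U_k(0)+x\bm V_k(x)$ with $\bm V_k$ a matrix polynomial of degree $k-1$ (this is \eqref{qhatsuma}, and $\bm U_k(0)$ is invertible by \eqref{qhatin0}), the only genuinely singular contribution to $\int_{-1}^{1}x^{-1}\bm U_n\bm\Psi\bm U_m^T\,dx$ is $\bm U_n(0)\bm M_{-1}\bm U_m^T(0)$, finite by hypothesis, the remaining terms being polynomials integrated against $\bm\Psi$.

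The core of the argument is a first-order recurrence for $G_{n,m}$ in $n$. The elementary fact I would use is that $x\,\bm{\Psi_U}(x)=\bm\Psi(x)$ when integrated against any matrix polynomial, because $x\,\delta_0(x)=0$ annihilates the point mass in \eqref{psihat}. Multiplying \eqref{relacionq-qhat}, $x\bm Q_n(x)=Y_n\bm U_n(x)+X_n\bm U_{n+1}(x)$, on the right by $\bm{\Psi_U}(x)\bm U_m^T(x)$ and integrating gives $Y_nG_{n,m}+X_nG_{n+1,m}=\int_{-1}^{1}\bm Q_n(x)\bm\Psi(x)\bm U_m^T(x)\,dx$. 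Substituting $\bm U_m=R_m\bm Q_{m-1}+S_m\bm Q_m$ from \eqref{relacionqhat-q} and using the orthogonality $\int_{-1}^{1}\bm Q_i(x)\bm\Psi(x)\bm Q_j^T(x)\,dx=\Pi_i^{-1}\delta_{i,j}$ yields
\[
Y_nG_{n,m}+X_nG_{n+1,m}=\Pi_n^{-1}\bigl(\delta_{n,m-1}R_m^T+\delta_{n,m}S_m^T\bigr),\qquad n\ge0 ,
\]
(with the convention that terms carrying $\bm Q_{-1}$ are absent). Because the $X_n$ are invertible, this recurrence determines $G_{1,m},G_{2,m},\dots$ from the single datum $G_{0,m}$.

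Next I would check that $G_{n,m}=\widetilde\Pi_n^{-1}\delta_{n,m}$ solves this recurrence. The cases $m<n$ and $m>n+1$ are trivial; the cases $m=n$ and $m=n+1$ reduce, respectively, to $Y_n\widetilde\Pi_n^{-1}=\Pi_n^{-1}S_n^T$ and $X_n\widetilde\Pi_{n+1}^{-1}=\Pi_n^{-1}R_{n+1}^T$. The first is a rearrangement of \eqref{coefpot1}, $\widetilde\Pi_n=Y_n^T\Pi_nS_n^{-1}$, once one uses that $\widetilde\Pi_n$ is diagonal so that it equals $S_n^{-T}\Pi_nY_n$ as well; the second follows from the factorization identities $A_n=X_nS_{n+1}$, $C_{n+1}=Y_{n+1}R_{n+1}$ of \eqref{ABC-XYSR} combined with the symmetry equation $\Pi_nA_n=C_{n+1}^T\Pi_{n+1}$ for $\bm P$. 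It then remains to verify the initial datum $G_{0,m}=\widetilde\Pi_0^{-1}\delta_{0,m}$. Since $\bm U_0=S_0$ and $\int_{-1}^{1}\bm{\Psi_U}(x)\,dx=\bm M_{-1}+(\bm N-\bm M_{-1})=\bm N$, where $\bm N$ is the bracket matrix in \eqref{psihat}, we get $G_{0,0}=S_0\bm NS_0^T$, and a direct $2\times2$ computation gives $S_0\bm NS_0^T=\mathrm{diag}(1/y_0,\, r_{-1}/\alpha)=\widetilde\Pi_0^{-1}$ — this is precisely the role of the coefficient of $\delta_0(x)$ in \eqref{psihat}. For $m\ge1$, writing again $\bm U_m(x)=\bm U_m(0)+x\bm V_m(x)$, the $x^{-1}\bm\Psi$ term reproduces $\bm M_{-1}\bm U_m^T(0)$, which cancels the $-\bm M_{-1}$ in \eqref{psihat}, while in $\int_{-1}^{1}\bm\Psi(x)\bm V_m^T(x)\,dx$ only the $\bm Q_0$-component of $\bm V_m$ survives the orthogonality; using $\bm U_1(0)=-X_0^{-1}Y_0S_0$ from \eqref{qhatin0} this gives $G_{0,m}=\bigl(\widetilde\Pi_0^{-1}-S_0\Pi_0^{-1}Y_0^{-T}\bigr)S_0^{-T}\bm U_m^T(0)$, which vanishes because $\widetilde\Pi_0^{-1}=S_0\Pi_0^{-1}Y_0^{-T}$ is once more \eqref{coefpot1} at $n=0$. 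This completes the verification, and with it the proof.

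I expect the main obstacle to be the bookkeeping around the $x^{-1}$ singularity and the Dirac mass at $0$: one must treat $\bm{\Psi_U}$ carefully as a genuine matrix of measures — justifying that $\int_{-1}^{1}x\,p(x)\bm{\Psi_U}(x)\,dx=\int_{-1}^{1}p(x)\bm\Psi(x)\,dx$ for matrix polynomials $p$ and that the moments $G_{n,m}$ are finite under the hypothesis on $\bm M_{-1}$ — and, above all, carry out the evaluation of the initial datum $G_{0,m}$. The identities $S_0\bm NS_0^T=\widetilde\Pi_0^{-1}$ and the $m\ge1$ cancellation are exactly where the explicit coefficient of $\delta_0(x)$ in \eqref{psihat} is forced, so that step has to be done by hand rather than merely invoked.
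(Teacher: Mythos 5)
Your proof is correct, and it reorganizes the same ingredients the paper uses --- \eqref{relacionqhat-q}, \eqref{relacionq-qhat}, \eqref{qhatsuma}, \eqref{qhatin0}, \eqref{coefpot1}, the orthogonality of $(\bm Q_n)_{n\geq0}$, and the fact that $x\bm{\Psi_U}(x)$ acts on matrix polynomials as $\bm\Psi(x)$ --- into a genuinely different scheme. The paper proves $\int_{-1}^{1}\widetilde{\bm Q}_n(x)\widetilde{\bm\Psi}(x)x^j\,dx=\bm 0_{2\times2}$ monomial by monomial: for $j=1,\dots,n-1$ by substituting \eqref{relacionqhat-q} and shifting one power of $x$ into the weight, for $j=0$ via the expansion \eqref{qhatsuma} (this is where the bracket in \eqref{psihat} is forced to cancel $\bm U_1^{-1}(0)X_0^{-1}\Pi_0^{-1}=-S_0^{-1}Y_0^{-1}\Pi_0^{-1}$), and then computes the diagonal norms separately using \eqref{relacionq-qhat}. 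You instead package everything into the first-order recurrence $Y_nG_{n,m}+X_nG_{n+1,m}=\Pi_n^{-1}(\delta_{n,m-1}R_m^T+\delta_{n,m}S_m^T)$ for the Gram matrices and conclude by uniqueness from the initial datum $G_{0,m}$. The overlap is substantial --- your $G_{0,m}$ computation is, up to transposition, the paper's $j=0$ step, and your $m=n$ verification is the paper's norm computation --- but your route has two advantages: the vanishing of $G_{n,m}$ for $m>n$ comes for free from the recurrence, whereas the monomial argument gives only $m<n$ and tacitly needs the symmetry of the weight matrix for the other triangle; and the norms are not a separate step. The price is the extra identity $X_n\widetilde\Pi_{n+1}^{-1}=\Pi_n^{-1}R_{n+1}^T$, which you correctly reduce to the symmetry equation $\Pi_nA_n=C_{n+1}^T\Pi_{n+1}$ combined with \eqref{ABC-XYSR}. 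I checked the one computation you defer to ``by hand'': with $a_{-1}=y_{-1}r_{-1}$ and $c_0=\alpha s_{-1}$ one indeed gets $S_0\bigl(\Pi_0Y_0S_0\bigr)^{-1}S_0^T=Y_0^{-1}\Pi_0^{-1}S_0^T=\diagonal(1/y_0,\,r_{-1}/\alpha)=\widetilde\Pi_0^{-1}$, so the initial datum closes as claimed.
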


\begin{proof}
For $n\ge 1$ and $j=1, \dots, n-1,$ we have 
\begin{equation*}
\begin{split}
\int_{-1}^{1} \bm{\widetilde Q}_n(x) \bm {\widetilde \Psi}(x) x^j dx &=\int_{-1}^1 \bm{U}_n(x) \bm{ \Psi_U}(x) x^j S_0^Tdx= \int_{-1}^1 [R_n \bm{Q}_{n-1}(x)+S_n\bm Q_n(x)] \bm{\Psi}(x) x^{j-1}S_0^Tdx\\
&= \int_{-1}^1 R_n \bm{Q}_{n-1}(x)\bm{ \Psi}(x) x^{j-1} S_0^Tdx+ \int_{-1}^1 S_n\bm Q_n(x)\bm{ \Psi}(x) x^{j-1}S_0^Tdx=\bm 0_{2\times2},
\end{split}
\end{equation*}
where for the first equality we have used \eqref{qtilde-qhat} and \eqref{psitilde}, for the second equality we have used \eqref{relacionqhat-q} and \eqref{psihat}, and finally we have used the orthogonality of the family $(\bm Q_n)_{n\geq0}$. Now, for $n\geq1$ we have, using \eqref{qhatsuma}, that
\begin{equation*}
\begin{split}
\int_{-1}^{1} \bm{\widetilde Q}_n(x) \bm {\widetilde \Psi}(x) dx &=\int_{-1}^{1} \bm{U}_n(x) \bm{\Psi_U}(x)S_0^Tdx=\int_{-1}^{1} \bm{U}_n(0)\left[ I+x\sum_{k=0}^{n-1} \bm{U}_{k+1}^{-1}(0) X_k^{-1}\bm Q_k(x)\right] \bm{ \Psi_U}(x)S_0^Tdx\\
&= \bm{U}_n(0) \left[ \int_{-1}^{1} \bm{\Psi_U}(x)dx +\sum_{k=0}^{n-1} \bm{U}_{k+1}^{-1}(0) X_k^{-1}  \int_{-1}^{1}  \bm Q_k(x) \bm \Psi(x)dx  \right] S_0^T.
\end{split}
\end{equation*}
The second part of the previous sum vanishes for $k=1,\ldots, n-1$. Therefore the only nonzero term is for $k=0$, i.e., $\bm{U}_1^{-1}(0)X_0^{-1}\int_{-1}^1 \bm \Psi(x)dx=\bm{U}_1^{-1}(0)X_0^{-1}\Pi_0^{-1}$. A direct computation using \eqref{qhatin0}, \eqref{XYSR}, \eqref{potcoeffm} and \eqref{potcoeff} shows that 
\begin{equation*}
\bm{U}_1^{-1}(0)X_0^{-1}\Pi_0^{-1}=-S_0^{-1}Y_0^{-1}\Pi_0^{-1}=-\frac{1}{y_0}\begin{pmatrix} 1&-r_{-1}/s_{-1}\\ -r_{-1}/s_{-1}& (r_{-1}/s_{-1})^2+y_0r_{-1}/\alpha s^2_{-1}\end{pmatrix}.
\end{equation*}
From the definition of $\bm{ \Psi_U}(x)$ in \eqref{psihat} we obtain that $\int_{-1}^{1} \bm{\widetilde Q}_n(x) \bm {\widetilde \Psi}(x) dx =\bm 0_{2\times2}$.

Finally, for $n\ge 0,$ and using \eqref{qtilde-qhat}, \eqref{psitilde}, \eqref{relacionq-qhat}, \eqref{relacionqhat-q} and the orthogonality properties, we have 
\begin{align*}
\int_{-1}^{1}\widetilde{\bm Q}_n(x) \bm{\widetilde \Psi}(x) \bm{\widetilde Q}_n^T(x)dx&=\int_{-1}^{1} \bm {U}_n (x) S_0^{-1} S_0 \bm{\Psi_U}(x) S_0^T S_0^{-T}  \bm{U}_n^T(x)dx\\
&=\int_{-1}^{1}Y_n^{-1}\left[x\bm Q_n(x)-X_n\bm{U}_{n+1}(x) \right] \bm{\Psi_U}(x) \bm{U}_n^T(x)dx\\
&=Y_n^{-1}\int_{-1}^{1}x\bm Q_n(x)\bm{\Psi_U}(x) \bm{U}_n^T(x)dx\\
&=Y_n^{-1}\int_{-1}^{1}\bm Q_n(x)\bm \Psi(x)\left[R_n \bm Q_{n-1}(x)+S_n\bm Q_n(x) \right]^Tdx\\
&=Y_n^{-1}\left[\int_{-1}^{1}\bm Q_n(x)\bm \Psi(x) \bm Q_n^T(x)dx \right]S_n^T\\
&=Y_n^{-1} \Pi_n^{-1} S_n^T=(S_n^{-T}\Pi_n Y_n)^{-1}=\widetilde \Pi_n^{-T}=\widetilde \Pi_n^{-1},
\end{align*}
where in the last steps we have used the formula \eqref{coefpot1} and the fact that $(\widetilde \Pi_n)_{n\geq0}$ are diagonal matrices.
\end{proof}

\begin{remark}
Observe that the derivation of the spectral matrix $\bm{\widetilde \Psi}(x)$ for the matrix-valued polynomials $(\widetilde{\bm Q}_n)_{n\geq0}$ is not restricted to the case where we start with a birth-death chain on $\ZZ$ with transition probability matrix $P$. For \emph{any} tridiagonal block matrix $\bm P$ as in \eqref{P2} with $N\times N$ blocks, if we are able to find a factorization of the form $\bm P=\bm P_R\bm P_A$ where $\bm P_R$ and $\bm P_A$ are given by \eqref{BPRA}, then we can follow the same steps to compute the spectral matrix associated with the Darboux transformation  $\widetilde{\bm P}=\bm P_A\bm P_R$. Assuming that we have computed the spectral matrix $\bm\Psi(x)$ associated with $\bm P$ (which is not always possible, see Theorem 2.1 of \cite{DRSZ}) and that $\bm M_{-1}=\int_{-1}^{1}x^{-1}\bm\Psi(x)dx$ is well-defined (entry by entry), the spectral matrix $\bm{\widetilde \Psi}(x)$ associated with $\bm{\widetilde P}$ will be given by
$$
\bm{\widetilde \Psi}(x)=S_0\left(\frac{\bm\Psi(x)}{x}+\left[(\Pi_0Y_0S_0)^{-1}-\bm M_{-1}\right]\delta_0(x)\right)S_0^T.
$$ 
This is what is called a \emph{Geronimus transformation} of the original spectral matrix $\bm\Psi(x)$. Observe that $\bm{\widetilde \Psi}(x)$ is not necessarily a proper weight matrix even if $\bm{ \Psi}(x)$ is. For that we need the matrix $(\Pi_0Y_0S_0)^{-1}-\bm M_{-1}$ to be a positive semi-definite matrix. In the case we are treating in this paper we have that $\Pi_0Y_0S_0$ is a positive definite matrix (see \eqref{psihat}).
\end{remark}

\subsection{Example: random walk on $\ZZ$}

Let us consider an irreducible birth-death chain on $\ZZ$ with transition probability matrix $P$ as in \eqref{P1} with constant transition probabilities
$$
a_n=a, \quad b_n=b, \quad c_n=c, \quad n\in\ZZ,\quad a+b+c=1,\quad a,c>0,\quad b\geq0.
$$
This discrete-time birth-death chain is usually called a \emph{random walk}. If we consider an RA factorization we have that the continued fractions \eqref{ccff} can be explicitly computed and in this case they are given by
$$
H=\frac{1}{2}\left( 1+a-c-\sqrt{(1+c-a)^2-4c}\right), \quad H'=\frac{1}{2}\left( 1+c-a-\sqrt{(1+c-a)^2-4c}\right),
$$
with $a\le (1-\sqrt{c})^2$ (to ensure convergence). Following Proposition \ref{propalfx0} we have that the RA stochastic factori-zation is possible if and only if we take $\alpha \ge H'$ and $x_0\ge H$ bearing in mind that $\alpha+x_0\leq1$. Observe that if we choose $\alpha=H'$ and $x_0=H$, then we get 
$$
s_{-n}=H, \quad r_{-n}=H, \quad y_{-n}=1-H', \quad x_{-n}=H', \quad n\ge 1, 
$$
$$
s_n=1-H', \quad r_n=H', \quad y_n=1-H, \quad x_n=H, \quad n\ge 1, 
$$
and $y_0=1-H-H'$, $s_0=1$ and $r_0=0$. In this case the coefficients of the Darboux transformation remain ``almost" invariant, i.e., $\tilde a_n=a$, $\tilde b_n=b$ and $\tilde c_n=c$ for all $n$ except for $\tilde a_{-1}$, $\tilde a_0$, $\tilde b_0$, $\tilde c_{0}$, $\tilde c_1$.\\

The spectral matrix associated with this example is given by only an absolutely continuous part, i.e.,
\begin{equation}\label{WW1}
\bm\Psi(x)=\frac{1}{\pi\sqrt{(x-\sigma_-)(\sigma_+-x)}}\begin{pmatrix} 1 & \D\frac{x-b}{2c}\\
\D\frac{x-b}{2c}& a/c\end{pmatrix},\quad x\in[\sigma_-,\sigma_+],\quad \sigma_{\pm}=1-\left(\sqrt{a}\mp\sqrt{c}\right)^2.
\end{equation}
For details on how to compute this spectral matrix see Section 4.1 of \cite{dIJ}. Also a straightforward computation shows that the moment $\bm M_{-1}$ of $\bm\Psi$ is given by
\begin{equation*}\label{Mm11}
\bm M_{-1}=\begin{pmatrix} \D\frac{1}{\sqrt{\sigma_-\sigma_+}} &\D\frac{1}{2c}\left(1-\D\frac{b}{\sqrt{\sigma_-\sigma_+}}\right)\\[0.3cm]
\D\frac{1}{2c}\left(1-\D\frac{b}{\sqrt{\sigma_-\sigma_+}}\right)& \D\frac{a}{c\sqrt{\sigma_-\sigma_+}}\end{pmatrix}.
\end{equation*}
In order for $\bm M_{-1}$ to be well-defined we need to assume that $\sigma_->0$, i.e., $\sqrt{a}+\sqrt{c}<1$, or, in other words $a<(1-\sqrt{c})^2$, which is the condition for convergence of the continued fractions $H$ and $H'$. Once we have this information it is possible to compute the spectral matrix associated with the Darboux transformation $\widetilde P=P_AP_R$ at the beginning of Section \ref{sec3}, which we recall it is an ``almost'' birth-death chain except for the states $1$ and $-1$ (see \eqref{DTabc} and \eqref{DTdd}) and two free parameters, $\alpha$ and $x_0$. In this case we have
$$
S_0=\begin{pmatrix} 1&0\\1-c/\alpha& c/\alpha \end{pmatrix}.
$$
Following Theorem \ref{thmp} we have that the spectral matrix associated with $\widetilde P$ is given by (after some computations)
$$
\widetilde{\bm\Psi}(x)=\frac{1}{\pi x \sqrt{(x-\sigma_-)(\sigma_+-x)} } [\bm{\widetilde A}+ \bm{\widetilde B}x]+ \widetilde{\bm M}_{-1} \delta_0,
$$
where
$$
\bm{\widetilde A}=\begin{pmatrix} 1&\D\frac{2\alpha+H-H'-1}{2\alpha}\\[0.3cm]
\D\frac{2\alpha+H-H'-1}{2\alpha}& \D\frac{(\alpha-H')(H+\alpha-1)}{\alpha^2}\end{pmatrix}, \quad  \bm{\widetilde B}=\frac{1}{2\alpha}\begin{pmatrix} 0& 1\\ 1 & \D\frac{2(\alpha-c)}{\alpha} \end{pmatrix},
$$
$$
\widetilde{\bm M}_{-1}=\begin{pmatrix} \D\frac{x_0-H+\alpha-H'}{y_0(1-H-H')} &\D\frac{H'-\alpha}{\alpha(1-H-H')}\\[0.3cm]
\D\frac{H'-\alpha}{\alpha(1-H-H')} & -\D\frac{(\alpha-H')(1-H-\alpha)}{\alpha^2(1- H - H')}\end{pmatrix}.
$$
Observe that in the case that $\alpha=H'$ and $x_0=H$ we have that $\widetilde{\bm{M}}_{-1}=\bm 0_{2\times2}.$

\section{Absorbing-reflecting factorization}\label{secAR}

In this section we will use the same notation as in Section \ref{secRA}, but replacing all parameters, matrices, etc. in the RA factorization by a tilde superscript. For instance $y_n,x_n,s_n,r_n,\alpha$ in \eqref{PRR} and \eqref{PAA} will be replaced by $\tilde y_n,\tilde x_n,\tilde s_n,\tilde r_n,\tilde\alpha$, respectively, $\bm P_R, \bm P_A, Y_n, X_n, S_n, R_n$ in \eqref{BPRA} will be replaced by $\widetilde{\bm P}_R, \widetilde{\bm P}_A, \widetilde Y_n, \widetilde X_n, \widetilde S_n, \widetilde R_n$, respectively, and so on. 

As we saw at the beginning of Section \ref{sec3} the multiplication of matrices of the form $\widetilde P_A\widetilde P_R$, where $\widetilde P_A$ and $\widetilde P_R$ are given by \eqref{PAA} and \eqref{PRR}, gives rise to Markov chain which is ``almost'' a birth-death chain, except for the states $1$ and $-1$ where it is possible to go from $1$ to $-1$ and viceversa (see \eqref{DTdd}). Hence it will not be interesting to start from a birth-death chain $P$ and consider an absorbing-reflecting (or AR) factorization of the form $P=\widetilde P_A\widetilde P_R$, since this will imply that both $\widetilde P_A$ and $\widetilde P_R$ will be splited into two separated birth-death chains at the state $0$. Therefore it will make more sense to start with an irreducible Markov chain $\{X_t : t=0,1,\ldots\}$ on $\ZZ$ with transition probability matrix
\begin{equation}\label{P12}
P=\left(
\begin{array}{cccc|ccccc}
\ddots&\ddots&\ddots&&&&&\\
&c_{-2}&b_{-2}&a_{-2}&&&&&\\
&&c_{-1}&b_{-1}&a_{-1}&d_{-1}&&&\\
\hline
&&&c_0&b_0&a_0&&&\\
&&&d_1&c_1&b_1&a_1&&\\
&&&&&c_2&b_2&a_2&\\
&&&&&&\ddots&\ddots&\ddots
\end{array}
\right).
\end{equation}
A diagram of the transitions of this Markov chain is similar to the one given in Section \ref{sec3}. If we perform the same labeling as in \eqref{label} then $P$ is equivalent to a semi-infinite $2\times2$ block tridiagonal matrix $\bm P$ of the form
\begin{equation}\label{P22}
\bm P=
\left(
\begin{array}{ccccccccccccc}
b_0&c_0&\temp &\hspace{-0.3cm} a_0&0&\temp & & & \\
a_{-1}&b_{-1}&\temp &\hspace{-0.3cm}d_{-1}&c_{-1}&\temp & & & \\
\cline{1-8}
c_1&d_1&\temp &\hspace{-0.3cm}b_1&0&\temp &\hspace{-0.3cm} a_1&0&\temp& & \\
0&a_{-2}&\temp &\hspace{-0.3cm}0&b_{-2}&\temp &\hspace{-0.3cm}0&c_{-2}&\temp& & \\
\cline{1-11}
&&\temp &\hspace{-0.3cm}c_2&0&\temp &\hspace{-0.3cm}b_2&0&\temp&\hspace{-0.3cm} a_2 &0&\temp \\
&&\temp &\hspace{-0.3cm}0&a_{-3}&\temp &\hspace{-0.3cm}0&b_{-3}&\temp&\hspace{-0.3cm} 0 & c_{-3}&\temp \\
\cline{3-13}
&&&&&\temp &\hspace{-0.3cm}\ddots&&\temp &\hspace{-0.3cm} \ddots&&\temp&\hspace{-0.3cm} \ddots \\
\end{array}
\right),
\end{equation}
where the only difference with the coefficients in \eqref{ABC} is the triangular shape of the matrices $A_0$ and $C_1$, given by
$$
A_0=\begin{pmatrix} a_0& 0 \\ d_{-1}& c_{-1}\end{pmatrix}, \quad C_1=\begin{pmatrix} c_1& d_1 \\ 0 &  a_{-2} \end{pmatrix}.
$$
Now, using the same notation as in \eqref{BPRA} (see also \eqref{XYSR}), let us consider the block matrix factorization $\bm P=\widetilde{\bm P}_A \widetilde{\bm P}_R$, which in this case is a LU block matrix factorization. A direct computation shows
\begin{equation}\label{ABC-tildeXYSR}
\begin{split}
A_n&=\widetilde S_{n}\widetilde X_n, \quad n\ge 0, \\
B_n&=\widetilde S_n\widetilde Y_n+\widetilde R_{n}\widetilde X_{n-1} , \quad n\ge1,\quad B_0=\widetilde S_0 \widetilde Y_0,  \\
C_n&=\widetilde R_n\widetilde Y_{n-1} , \quad n\ge 1, 
\end{split}
\end{equation}
or equivalently, using $P=\widetilde{P}_A \widetilde{P}_R$, we obtain
\begin{align}
&a_n=\tilde x_n\tilde s_{n}, \quad n\ge 1,\quad a_0=\tilde x_0,\quad a_{-n}=\tilde y_{-n+1}\tilde r_{-n},\quad n\ge 1,\label{aatilde}\\
&b_n=\tilde y_n\tilde s_n+\tilde x_{n-1} \tilde r_{n}, \quad n\ge 1, \quad b_0=\tilde  y_0,\quad b_{-n}=\tilde y_{-n} \tilde s_{-n}+ \tilde x_{-n+1} \tilde r_{-n},\quad n\ge 1,\label{bbtilde}\\
&c_n=\tilde y_{n-1} \tilde r_n, \quad n\ge 1, \quad c_0=\tilde \alpha, \quad c_{-n}=\tilde x_{-n} \tilde s_{-n},\quad n\ge 1,\label{cctilde}\\
&d_{-1}=\tilde r_{-1} \tilde x_0,\quad d_{1}=\tilde r_{1} \tilde \alpha, \label{ddtilde}
\end{align}
with  $\widetilde P_A$ and $\widetilde P_R$ stochastic matrices, i.e., all entries are nonnegative and
\begin{equation}\label{x0stilde}
\tilde \alpha+\tilde x_0+\tilde y_0=1,\quad \tilde x_n+\tilde y_n=1, \quad \tilde s_n+\tilde r_n=1, \quad n\in\ZZ \setminus \{0\}.
\end{equation}
Observe that, from \eqref{ddtilde} and \eqref{aatilde}, we have $\tilde r_{-1}=d_{-1}/\tilde x_0=a_{-1}/\tilde y_0$ and $\tilde r_1=d_1/\tilde\alpha=c_{1}/\tilde y_0$. Therefore, from \eqref{aatilde} and \eqref{bbtilde}, we get that the factorization is posible if and only if
\begin{equation}\label{conddd}
d_{-1}=\frac{a_{-1}a_0}{b_0},\quad d_1=\frac{c_0c_1}{b_0}.
\end{equation}
This means that, in order to have a stochastic AR factorization, the Markov chain \eqref{P12} is not a general one but restricted to the conditions \eqref{conddd}. Additionally we need to have that $0<d_{-1},d_1<1$, so we have to assume that $b_0>\max\{a_{-1}a_0,c_0c_1\}$. Once we have all the previous considerations it is possible to get the coefficients $\tilde y_0$, $\tilde r_1$, $\tilde s_1$, $\tilde x_1$, $\tilde y_1,\ldots$ recursively using \eqref{bbtilde}, \eqref{cctilde} and \eqref{x0stilde} in that order. On the other hand it is possible to get $\tilde y_0$, $\tilde r_{-1}$, $\tilde s_{-1}$, $\tilde x_{-1}$, $\tilde y_{-1}$, $\tilde r_{-2},\ldots$ recursively using \eqref{bbtilde}, \eqref{aatilde} and \eqref{x0stilde} in that order. In this case we also have that $\tilde x_0=a_0$ and $\tilde \alpha=c_0$. Therefore \emph{there is no free parameter} and the factorization is unique. Let $\widetilde H$ and $\widetilde H'$ be the following continued fractions 
\begin{equation}\label{ccff2}
\widetilde H=\cFrac{c_1}{1}-\cFrac{a_1}{1}-\cFrac{c_2}{1}-\cFrac{a_2}{1}-\cdots,\quad \widetilde H'=\cFrac{a_{-1}}{1}-\cFrac{c_{-1}}{1}-\cFrac{a_{-2}}{1}-\cFrac{c_{-2}}{1}-\cdots
\end{equation}
For each continued fraction, consider the corresponding sequence of convergents
$(\tilde h_{n})_{n\geq0}$ and $(\tilde h_{-n}')_{n\geq0}$, given by
\begin{equation}\label{hhh2}
\tilde h_{n}=\frac{\tilde A_n}{\tilde B_n},\quad \tilde h_{-n}'=\frac{\tilde A_{-n}'}{\tilde B_{-n}'}.
\end{equation}
Again, we refer to \cite{GdI2, dIJ} to find more information about the notation and definitions on continued fractions.
\begin{proposition}\label{Propo2}
Let $\widetilde H$ and $\widetilde H'$ be the continued fractions given by \eqref{ccff2} and the corresponding convergents $\tilde h_n$ and $\tilde h_{-n}$ defined by \eqref{hhh2}. Assume that 
\begin{equation*}\label{AnBn2}
0<A_n<B_n,\quad\mbox{and}\quad0<A_{-n}'<B_{-n}',\quad n\geq1.
\end{equation*}
Then both $\widetilde H$ and $\widetilde H'$ are convergent. Moreover, let $P=\widetilde P_A\widetilde P_R$. Then, both $\widetilde P_A$ and $\widetilde P_R$ are stochastic matrices if and only if
\begin{equation*}\label{yy0r2}
b_0>\max\{\widetilde H,\widetilde H'\}.
\end{equation*}
\end{proposition}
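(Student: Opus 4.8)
The plan is to prove this exactly as Proposition~\ref{propalfx0} was proved (that is, as in \cite{dIJ} and \cite{GdI3}): split the analysis into the indices $n\ge1$ and $n\le-1$, which decouple because the coefficients of $\widetilde P_A$ and $\widetilde P_R$ around the state $0$ are completely determined. Indeed, \eqref{aatilde}, \eqref{bbtilde} and \eqref{cctilde} already force $\tilde x_0=a_0$, $\tilde y_0=b_0$ and $\tilde\alpha=c_0$, and, via \eqref{ddtilde}, the consistency conditions \eqref{conddd}; there is no free parameter. For the right half I would introduce $u_n:=\tilde y_{n-1}$ (so $u_1=b_0$) and solve \eqref{cctilde}, \eqref{x0stilde} and \eqref{aatilde} in turn to get $\tilde r_n=c_n/u_n$, $\tilde s_n=1-\tilde r_n$, $\tilde x_n=a_n/\tilde s_n$ and
\[
u_{n+1}=\tilde y_n=\frac{(1-a_n)u_n-c_n}{u_n-c_n},\qquad u_1=b_0,
\]
the remaining identities \eqref{bbtilde} then holding automatically by stochasticity of $P$. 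The map $f_n(t)=\big((1-a_n)t-c_n\big)/(t-c_n)$ is an increasing homography of $(c_n,\infty)$ onto $(-\infty,1-a_n)$, and one checks that $\tilde r_n,\tilde s_n,\tilde x_n,\tilde y_n$ are well-defined and lie in $(0,1)$ for all $n\ge1$ exactly when $u_n>c_n$ for all $n\ge1$. The left half is completely symmetric: with $u_n':=\tilde y_{-n+1}$, $u_1'=b_0$, one gets $u_{n+1}'=\big((1-c_{-n})u_n'-a_{-n}\big)/(u_n'-a_{-n})$ and needs $u_n'>a_{-n}$ for all $n\ge1$.

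The heart of the matter is to identify the infimum of the admissible $u_1$ with $\widetilde H$ (and, on the left, with $\widetilde H'$). For this I would use the tails $\widetilde H=\widetilde H_1,\ \widetilde H_1',\ \widetilde H_2,\ \widetilde H_2',\dots$ of the continued fraction \eqref{ccff2}, which satisfy $\widetilde H_n=c_n/(1-\widetilde H_n')$ and $\widetilde H_n'=a_n/(1-\widetilde H_{n+1})$ and which, under the stated hypothesis $0<\tilde A_n<\tilde B_n$, all lie in $(0,1)$; in particular $\widetilde H_n>c_n$. Feeding $u_1=\widetilde H_1$ into the recursion gives, by an immediate induction, $u_n=\widetilde H_n$, $\tilde s_n=\widetilde H_n'$, $\tilde x_n=1-\widetilde H_{n+1}$, $\tilde y_n=\widetilde H_{n+1}$, all strictly between $0$ and $1$; since every $f_n$ is increasing, $u_1>\widetilde H$ keeps $u_n>\widetilde H_n>c_n$ for all $n$, while $u_1<\widetilde H$ keeps the whole orbit strictly below the $\widetilde H_n$. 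For the necessity direction I would compare with the orbit generated by the $m$-th convergent $\tilde h_m$ of \eqref{hhh2}: the $\tilde h_m$ increase to $\widetilde H$, the orbit started from $\tilde h_m$ reaches the boundary (a value $\le c_k$) after finitely many steps, and monotonicity of the $f_n$ then forces the orbit of any $u_1<\widetilde H$ to fall to $u_k\le c_k$ after finitely many steps, so the right-half coefficients cannot all be probabilities. The analogous argument on the left yields the threshold $\widetilde H'$.

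Combining the two halves with $u_1=u_1'=b_0$ gives that both $\widetilde P_A$ and $\widetilde P_R$ are stochastic precisely when $b_0>\max\{\widetilde H,\widetilde H'\}$; note that this inequality already implies $b_0>\max\{a_{-1}a_0,c_0c_1\}$ (hence $0<d_{\pm1}<1$), since $\widetilde H>c_1>c_0c_1$ and $\widetilde H'>a_{-1}>a_{-1}a_0$. The convergence of $\widetilde H$ and $\widetilde H'$, and the monotonicity $\tilde h_m\nearrow\widetilde H$ used above, follow from the assumptions $0<\tilde A_n<\tilde B_n$ and $0<\tilde A_{-n}'<\tilde B_{-n}'$ together with the standard theory of continued fractions attached to birth-death chains, exactly as in \cite{GdI2, dIJ}. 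The two steps I expect to carry the real weight are (i) this continued-fraction convergence and monotonicity and (ii) the monotone-comparison argument pinning the threshold at $\widetilde H$; since both are essentially verbatim transcriptions of the arguments behind \cite[Theorem~2.1]{dIJ} and \cite[Theorem~2.1]{GdI3} once one notices that the recursion for $u_n$ above has the same shape as the ones appearing there, the proof can be omitted in the paper just as for Proposition~\ref{propalfx0}.
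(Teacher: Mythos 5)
The paper itself omits this proof, deferring to Theorem 2.1 of \cite{dIJ} and Theorem 2.1 of \cite{GdI3}; your reconstruction --- decoupling the two half-lines, reducing stochasticity of the factors to the orbit condition $u_n>c_n$ under the increasing homographies $f_n$, and pinning the threshold at the continued-fraction tails by monotone comparison with the convergents --- is exactly the intended argument and is correctly executed. The one point worth flagging is that your own analysis shows the boundary value also works (at $u_1=\widetilde H$ the orbit equals the tails $\widetilde H_n$, all strictly inside $(0,1)$), so your argument actually delivers the non-strict threshold $b_0\ge\max\{\widetilde H,\widetilde H'\}$, consistent with Proposition \ref{propalfx0} and with the Remark following this proposition, rather than the strict inequality asserted in the statement.
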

\begin{proof}
The proof follows the same lines as the proof of Theorem 2.1 of \cite{dIJ} and will be omitted (see also Theorem 2.1 of \cite{GdI3}).
\end{proof}
\begin{remark}
Observe that the condition of the previous proposition implies that $b_0\ge \widetilde H$ and $b_0\ge \widetilde H'$. Also, if $b_0\ge \widetilde H$, then in particular $b_0> c_1$ and since we know that $0<c_0<1$, then we have $b_0>c_1>c_0c_1$.
Similarly, if $b_0\ge \widetilde H'$, in particular we have $b_0>a_{-1}$ and since $0<a_0<1$, then we get 
$b_0>a_{-1}>a_0a_{-1}$. This allows us  to conclude that if $b_0 \ge \max\{ \widetilde H, \widetilde H'\}$, then $b_0\ge \max\{a_{-1}a_0, c_0c_1\}$ and therefore $0<d_1, d_{-1}<1$. Observe that this does not mean that the factorization is always possible since we need to have \eqref{conddd}. 
\end{remark}


\subsection{Stochastic Darboux transformation and the associated spectral matrix}

As we did in Section \ref{sec3}, if $P=\widetilde{P}_A \widetilde{P}_R$ (or equivalently $\bm P=\widetilde{\bm P}_A \widetilde{\bm P}_R$) then we can perform a Darboux transformation given by $\widehat{P}=\widetilde{P}_R \widetilde{P}_A$ (or $\widehat{\bm P}=\widetilde{\bm P}_R \widetilde{\bm P}_A$). In block matrix form we have
\begin{equation}\label{hatp}
\widehat{\bm P}=\begin{pmatrix}
\widehat B_0 &\widehat  A_0 & & \\
\widehat  C_1& \widehat B_1& \widehat A_1& \\
& \widehat  C_2& \widehat B_2 & \widehat  A_2\\
& &\ddots &\ddots&\ddots \\
\end{pmatrix}=\begin{pmatrix}
\widetilde Y_0 &\widetilde X_0 & & \\
& \widetilde Y_1&\widetilde X_1& \\
& & \widetilde Y_2 &\widetilde  X_2\\
& & &\ddots&\ddots \\
\end{pmatrix}
\begin{pmatrix}
\widetilde S_0 & & & \\
\widetilde R_1& \widetilde S_1&& \\
& \widetilde R_2 & \widetilde S_2 &\\
& & \ddots &\ddots \\
\end{pmatrix}.
\end{equation}
A direct computation shows
\begin{equation}\label{DTABC2}
\begin{split}
\widehat  A_n&= \widetilde X_n\widetilde S_{n+1}, \quad n\ge 0, \\
\widehat  B_n&=\widetilde X_{n}\widetilde R_{n+1}+\widetilde Y_n\widetilde S_n, \quad n\ge 0, \\
\widehat  C_n&=\widetilde Y_{n}\widetilde R_n, \quad n\ge 1. 
\end{split}
\end{equation}
An important difference now is that $\widehat{P}$ is in this case a discrete-time birth-death chain on $\ZZ$ (without transitions between the states $1$ and $-1$). As in Section \ref{sec3} we are interested in the spectral matrix associated with $\widehat{\bm P}$ given that we have information about the spectral matrix associated with $\bm P$. In principle we can not guarantee that there exists a weight matrix associated with $\bm P$. But it is possible to see, using Theorem 2.1 of \cite{DRSZ}, that there exists a spectral matrix $\bm\Psi$ such that the polynomials $(\bm Q_n)_{n\geq0}$ defined by the three-term recurrence relation \eqref{ttrrq} are orthogonal with respect to the spectral matrix $d\bm\Psi(x)$. The sequence of nonsingular matrices $(R_n)_{n\geq0}$ in that theorem is given by
\begin{equation*}
R_n=\begin{pmatrix} \sqrt{\pi_n} & 0 \\ 0 & \sqrt{\pi_{-n-1}}\end{pmatrix},\quad n\geq0,
\end{equation*}
where $\pi=(\pi_n)_{n\in\ZZ}$ are the potential coefficients given by \eqref{potcoeff}. A direct computation using \eqref{conddd} shows that $R_nB_nR_n^{-1}, n\geq0,$ are always symmetric matrices and that $R_n^TR_n=(C_1^T\cdots C_n^T)^{-1}R_0^TR_0 A_0 \cdots A_{n-1}, n\ge 1.$ Therefore we have $R_n^TR_n=\Pi_n$ where $\Pi_n$ is defined by \eqref{potcoeffm}.

\medskip

Consider now the matrix-valued polynomials
\begin{equation*}
\bm{\bar Q}_n(x)=\widetilde Y_n \bm Q_{n}(x)+\widetilde X_n \bm Q_{n+1}(x), \quad n \ge 0,
\end{equation*}
where $(\widetilde Y_n)_{n\geq0}$ and $(\widetilde X_n)_{n\geq0}$ are defined by \eqref{XYSR}. If we denote $\bm{\bar Q}=(\bm{\bar Q}_0^T,\bm{\bar Q}_1^T,\cdots)^T$, then we have that $\bm{\bar Q}=\bm{\widetilde P}_R\bm Q$. Therefore, from the AR factorization of $\bm P$, we get $\bm{\widetilde P_A}\bm{\bar Q}= \bm{\widetilde P_A} \bm{\widetilde P_R} \bm Q= \bm P\bm Q=x\bm Q$, or in other words
\begin{equation*}
\begin{split}
x\bm Q_0(x)&=\widetilde S_0 \bm{\bar Q}_0(x),\\
x\bm Q_n(x)&=\widetilde R_n \bm{\bar Q}_{n-1}(x)+\widetilde S_n\bm{\bar Q}_{n}(x), \quad n\ge 1.
\end{split}
\end{equation*}
From the previous equation we have that $\bm{\bar Q}_0(x)=x \widetilde S_0^{-1}\bm Q_0(x)=x \widetilde S_0^{-1}$ and by induction we can prove that $\bm{\bar Q}_n(x)=x\bm T_n(x)$, where $(\bm T_n)_{n\geq0}$ is a family of matrix-valued polynomials with $\mbox{deg}(\bm T_n(x))=n, n\geq0,$ and nonsingular leading coefficient. We can rewrite the previous two formulas in terms of the polynomials $(\bm T_n)_{n\geq0}$. Indeed,
\begin{equation}\label{relaciont-q}
x \bm{T}_n(x)=\widetilde Y_n \bm Q_{n}(x)+\widetilde X_n \bm Q_{n+1}(x), \quad n \ge 0,
\end{equation}
and 
\begin{equation}\label{relacionq-t}
\begin{split}
\bm Q_0(x)&=\widetilde S_0 \bm{T}_0(x),\\
\bm Q_n(x)&=\widetilde R_n \bm{T}_{n-1}(x)+\widetilde S_n\bm{T}_{n}(x), \quad n\ge 1.
\end{split}
\end{equation}
Since $\bm{T}_0(x)=\widetilde S_0^{-1}\neq I_{2\times2}$, let us define a new family 
$(\bm{\widehat Q}_n)_{n\geq0}$ such that $\bm{\widehat Q}_0=I_{2\times2}$, i.e.,
\begin{equation}\label{widehatq}
\bm{\widehat Q}_n(x)=\bm{T}_n(x) \widetilde S_0.
\end{equation}
Finally, if we define $(\widehat \Pi_n)_{n\geq0}$ as the solution of the symmetry equations for $\bm{\widehat P}$ in \eqref{hatp}, given by 
\begin{equation*}
\widehat \Pi_n=(\widehat C_1^T\cdots \widehat C_n^T)^{-1}\widehat \Pi_0 \widehat A_0 \cdots \widehat  A_{n-1}, \quad n\ge 1.
\end{equation*}
Then, using \eqref{ABC-tildeXYSR}, \eqref{DTABC2} and the previous equation, we get
\begin{equation}\label{coefpot2}
\widehat \Pi_n=\widetilde Y_n^{-T} \Pi_n \widetilde S_n, \quad n\ge 0.
\end{equation}
Computing $\widehat\Pi_0$ using \eqref{aatilde} and \eqref{cctilde}, we have
\begin{equation*}
\widehat \Pi_0=\frac{1}{\tilde y_0} \begin{pmatrix}  1 &0\\ 0& \frac{\widetilde \alpha \widetilde s_{-1}}{\widetilde y_{-1} \widetilde r_{-1}}\end{pmatrix}.
\end{equation*}
Therefore, as in the RA factorization, $(\widehat \Pi_n)_{n\geq0}$ are always \emph{diagonal matrices}. We are ready to prove the main result of this section.
\begin{theorem}\label{thmp2}
Let $\{X_t: t=0, 1, \dots\}$ be the Markov chain on $\ZZ$ with transition probability matrix $P$ given by \eqref{P12} and $\{\widetilde X_t : t=0, 1, \dots\}$ the birth-death chain generated by the Darboux transformation of $P=\widetilde P_A\widetilde P_R$. Then the matrix-valued polynomials $(\widehat{\bm Q}_n)_{n\geq0}$ defined by \eqref{widehatq} are orthogonal with respect to the following spectral matrix 
\begin{equation}\label{hatpsi1}
\bm{\widehat \Psi}(x)=x\widetilde S_0^{-1} \bm \Psi(x)\widetilde S_0^{-T},
\end{equation}
where the constant matrix $\widetilde S_0$ is defined by \eqref{XYSR} and $\bm \Psi(x)$ is the original spectral matrix associated with $P$. Moreover, we have
\begin{equation*}\label{qusnorms2}
\int_{-1}^{1} \widehat{\bm Q}_n (x) \widehat{\bm\Psi} (x) \widehat{\bm Q}_m^T(x) dx=\widehat{\Pi}_n^{-1}\delta_{n,m},
\end{equation*}
where $(\widehat{\Pi}_n)_{n\geq0}$ are defined by \eqref{coefpot2}.
\end{theorem}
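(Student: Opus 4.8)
The plan is to follow the blueprint of the proof of Theorem \ref{thmp}, but the Christoffel case is in fact lighter: since \eqref{hatpsi1} \emph{multiplies} $\bm\Psi$ by $x$ instead of dividing, no Dirac mass at the origin is created and no hypothesis on a negative moment of $\bm\Psi$ is needed. First I would insert the definitions \eqref{widehatq} and \eqref{hatpsi1} into the integral to be evaluated. Because $\widehat{\bm Q}_n(x)=\bm T_n(x)\widetilde S_0$ and $\widehat{\bm\Psi}(x)=x\,\widetilde S_0^{-1}\bm\Psi(x)\widetilde S_0^{-T}$, all the factors $\widetilde S_0$ and $\widetilde S_0^{-1}$ telescope and one is reduced to showing
\[
\int_{-1}^{1}\widehat{\bm Q}_n(x)\widehat{\bm\Psi}(x)\widehat{\bm Q}_m^T(x)\,dx=\int_{-1}^{1}x\,\bm T_n(x)\bm\Psi(x)\bm T_m^T(x)\,dx=\widehat\Pi_n^{-1}\delta_{n,m}.
\]
These integrals are finite since $\bm\Psi$ has compact support, $\widehat{\bm Q}_0=\bm T_0\widetilde S_0=I_{2\times2}$, and $\widehat{\bm\Psi}$ is symmetric because $\bm\Psi$ is; hence the left-hand side is symmetric under $n\leftrightarrow m$ up to transposition, and it suffices to treat $n\ge m$.

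For $n>m$ I would substitute $x\bm T_n(x)=\widetilde Y_n\bm Q_n(x)+\widetilde X_n\bm Q_{n+1}(x)$ from \eqref{relaciont-q}. Inverting the relations \eqref{relacionq-t} recursively, which is legitimate because each $\widetilde S_n$ (the leading coefficient of $\bm T_n$) is invertible, shows that $\bm T_m(x)$ is a left-linear combination of $\bm Q_0(x),\dots,\bm Q_m(x)$ with constant matrix coefficients. Consequently every term arising in $\int_{-1}^1 x\,\bm T_n\bm\Psi\bm T_m^T\,dx$ has the form $\int_{-1}^1\bm Q_k(x)\bm\Psi(x)\bm Q_\ell^T(x)\,dx$ with $k\in\{n,n+1\}$ and $\ell\le m<k$, and vanishes by the orthogonality relations for $(\bm Q_n)_{n\ge0}$. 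Thus the off-diagonal blocks are $\bm 0_{2\times 2}$, and by the transposition symmetry noted above the same holds for $n<m$.

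For the diagonal block with $n\ge 1$ I would again use \eqref{relaciont-q} in the first factor and the transpose of \eqref{relacionq-t}, namely $\bm T_n^T(x)=\bigl(\bm Q_n^T(x)-\bm T_{n-1}^T(x)\widetilde R_n^T\bigr)\widetilde S_n^{-T}$, in the second. The three cross terms involving $\bm Q_{n+1}\bm\Psi\bm Q_n^T$, $\bm Q_{n+1}\bm\Psi\bm T_{n-1}^T$ and $\bm Q_n\bm\Psi\bm T_{n-1}^T$ all vanish by orthogonality (since $\deg\bm T_{n-1}=n-1<n<n+1$), leaving
\[
\int_{-1}^{1}x\,\bm T_n(x)\bm\Psi(x)\bm T_n^T(x)\,dx=\widetilde Y_n\Bigl(\int_{-1}^{1}\bm Q_n(x)\bm\Psi(x)\bm Q_n^T(x)\,dx\Bigr)\widetilde S_n^{-T}=\widetilde Y_n\,\Pi_n^{-1}\,\widetilde S_n^{-T}.
\]
By \eqref{coefpot2}, $\widehat\Pi_n^{-1}=\widetilde S_n^{-1}\Pi_n^{-1}\widetilde Y_n^{T}$, and since $\widetilde Y_n$, $\widetilde S_n$ and $\Pi_n$ are all diagonal for $n\ge1$ the two expressions agree. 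For $n=0$ I would compute directly from $\bm T_0=\widetilde S_0^{-1}$: using \eqref{relaciont-q} with $n=0$, $\bm Q_0=I_{2\times 2}$, together with $\int_{-1}^1\bm\Psi(x)\,dx=\Pi_0^{-1}$ and $\int_{-1}^1\bm Q_1(x)\bm\Psi(x)\,dx=\bm 0_{2\times 2}$, one gets $\int_{-1}^1 x\,\bm T_0\bm\Psi\bm T_0^T\,dx=\widetilde Y_0\Pi_0^{-1}\widetilde S_0^{-T}$; this matrix is symmetric (it equals $\int_{-1}^1\widehat{\bm Q}_0\widehat{\bm\Psi}\widehat{\bm Q}_0^T\,dx$), so it coincides with its transpose $\widetilde S_0^{-1}\Pi_0^{-1}\widetilde Y_0^{T}=\widehat\Pi_0^{-1}$, again by \eqref{coefpot2}.

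The bulk of the work is purely the transpose bookkeeping around \eqref{relaciont-q}--\eqref{relacionq-t}; the only genuinely delicate point is the boundary index $n=0$, where $\widetilde Y_0$ and $\widetilde S_0$ are no longer diagonal, so one must invoke the symmetry of $\int_{-1}^1\widehat{\bm Q}_0\widehat{\bm\Psi}\widehat{\bm Q}_0^T\,dx$ (equivalently the already-recorded fact that $\widehat\Pi_0$ is diagonal) to identify $\widetilde Y_0\Pi_0^{-1}\widetilde S_0^{-T}$ with $\widehat\Pi_0^{-1}$. Everything else runs exactly as in Theorem \ref{thmp}, with the simplification that $\widehat{\bm\Psi}$ acquires no singular part and that no auxiliary assumption on the moments of $\bm\Psi$ is required.
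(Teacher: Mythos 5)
Your proposal is correct and follows essentially the same route as the paper's proof: substitute \eqref{widehatq} and \eqref{hatpsi1}, use \eqref{relaciont-q} to absorb the factor $x$, invoke orthogonality of $(\bm Q_n)_{n\geq0}$ for the off-diagonal blocks, and reduce the diagonal block to $\widetilde Y_n\Pi_n^{-1}\widetilde S_n^{-T}$ via \eqref{relacionq-t}, identifying it with $\widehat\Pi_n^{-1}$ through \eqref{coefpot2} and the diagonality of $\widehat\Pi_n$. The only cosmetic differences are that the paper tests orthogonality against monomials $x^j$ rather than expanding $\bm T_m$ in the $\bm Q_\ell$ basis, and that your explicit treatment of the $n=0$ boundary case is subsumed in the paper by the single observation that $\widehat\Pi_n^{-T}=\widehat\Pi_n^{-1}$.
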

\begin{proof}
For $n\ge 1$ and $j=0, \dots, n-1,$ we have 
\begin{equation*} 
\begin{split}
\int_{-1}^{1} \bm{\widehat Q}_n(x) \bm{\widehat \Psi}(x) x^j dx &= \int_{-1}^{1}x\bm T_n(x)\bm \Psi(x)x^{j}\widetilde S_0^{-T}dx =\int_{-1}^{1}[\widetilde Y_n \bm Q_n(x)+\widetilde X_n \bm Q_{n+1}(x)] \bm \Psi(x)x^{j}\widetilde S_0^{-T}dx\\
&=\widetilde Y_n \int_{-1}^{1}\bm Q_n(x) \bm \Psi(x)x^{j}\widetilde S_0^{-T}dx +\widetilde X_n \int_{-1}^{1} \bm Q_{n+1}(x)\bm \Psi(x)x^{j}\widetilde S_0^{-T}dx =\bm 0_{2\times2},
\end{split}
\end{equation*}
where for the first equality we have used \eqref{widehatq} and \eqref{hatpsi1}, for the second equality we have used \eqref{relaciont-q}, and finally we have used the orthogonality of the family $(\bm Q_n)_{n\geq0}$. Finally, for $n\ge 0,$ and using \eqref{relaciont-q}, \eqref{relacionq-t}, \eqref{coefpot2} and the orthogonality properties, we have
\begin{equation*}
\begin{split}
\int_{-1}^{1} \bm{\widehat Q}_n(x) \bm{\widehat \Psi}(x) \bm{\widehat Q}_n^T(x)dx&= \int_{-1}^{1}\bm T_n(x) x \bm \Psi(x)\bm T_n^T(x)dx=\int_{-1}^{1}[\widetilde Y_n \bm Q_n(x)+\widetilde X_n \bm Q_{n+1}(x)] \bm \Psi(x)\bm T_n^T(x)dx\\
&=\widetilde Y_n\int_{-1}^{1} \bm Q_n(x)\bm \Psi(x)\bm T_n^T(x)dx= \widetilde Y_n\int_{-1}^{1} \bm Q_n(x)\bm \Psi(x)[\widetilde S_n^{-1}\bm Q_n(x)-\widetilde S_n^{-1}\widetilde R_n \bm T_{n-1}(x)]^Tdx\\
&=\widetilde Y_n\int_{-1}^{1} \bm Q_n(x)\bm \Psi(x)\bm Q_{n}^T(x)\widetilde S_n^{-T}dx=\widetilde Y_n \Pi_n^{-1}\widetilde S_n^{-T}= (\widetilde S_n^{T}\Pi_n\widetilde Y_n^{-1})^{-1}=\widehat \Pi_n^{-T}= \widehat \Pi_n^{-1},
\end{split}
\end{equation*}
where in the final step we have used the fact that $(\widehat \Pi_n)_{n\geq0}$ are diagonal matrices.
\end{proof}
\begin{remark}
The weight matrix $\widehat{\bm\Psi}(x)$ in \eqref{hatpsi1} is called a \emph{Christoffel transformation} of the original spectral matrix $\bm\Psi(x)$.
\end{remark}

\subsection{Example: random walk with transitions between the states $1$ and $-1$}

Let us consider an irreducible Markov chain on $\ZZ$ with transition probability matrix $P$ as in \eqref{P12} where
$$
a_n=a, \quad n\in\ZZ\setminus\{-1\},\quad b_n=b, \quad n\in\ZZ,\quad c_n=c, \quad n\in\ZZ\setminus\{1\},
$$
and as usual $a+b+c=1,a,c>0,b\geq0$. From \eqref{conddd} the values of $a_{-1},d_{-1},c_1,d_1$ must be given in terms of $a,b,c$. Indeed,
$$
a_{-1}=\frac{ab}{1-c},\quad d_{-1}=\frac{a^2}{1-c},\quad c_1=\frac{bc}{1-a},\quad d_1=\frac{c^2}{1-a}.
$$
Since $1-c>0$ and $1-a>0,$ this implies that $a_{-1}, d_{-1}, c_1, d_1>0$. Now, observe that $a+c\le1$ and $ab<a$ since $0\leq b<1$. Then we have $ab+c<a+c\le1$, which implies that $ab<1-c$ and therefore $a_{-1}<1$. In the same way but using $a^2<a$, $bc<c$ and $c^2<c$ we get $d_{-1}<1$, $c_1<1$ and $d_1<1,$ respectively. Therefore, independently of the choice of $a, b$ and $c$, $P$ is always a stochastic matrix.

If we consider the AR factorization we have that the continued fractions in \eqref{ccff2} can be explicitly computed and in this case they are given by
$$
\widetilde H=\frac{bc}{J(1-a)}, \quad \widetilde H'=\frac{ab}{J'(1-c)},
$$ 
where 
\begin{equation}\label{JJp}
J=\frac{1}{2} \left( 1+c-a+\sqrt{(1+c-a)^2-4c}\right),\quad J'=\frac{1}{2}\left( 1+a-c+\sqrt{(1+c-a)^2-4c} \right).
\end{equation}
Therefore we get
$$
\widetilde H=\frac{b}{2(1-a)}\left( 1+c-a+\sqrt{(1+c-a)^2-4c} \right),\quad 
\widetilde H'=\frac{b}{2(1-c)}\left( 1+a-c+\sqrt{(1+c-a)^2-4c} \right),
$$
with $a\le (1-\sqrt{c})^2$ (to ensure convergence). With this condition we immediately have that $b>\widetilde H, b>\widetilde H'$ and then $b>\max \{\widetilde H, \widetilde H' \}$. Therefore, according to Proposition \ref{Propo2}, we have that the AR stochastic  factorization of $P$ is always possible. 


Let us now compute the spectral matrix $\bm\Psi(x)$ associated with this example. Since $P$ is not tridiagonal we do not have a birth-death chain on the integers $\ZZ$, so we can not apply the same methodology as we did in the previous section. However we can consider the block tridiagonal structure of $P$ given by $\bm P$ in \eqref{P22} and use the theory of matrix-valued orthogonal polynomials to compute the spectral matrix $\bm\Psi(x)$. $\bm P$ is given in this case by
\begin{equation*}
\bm P=
\left(
\begin{array}{ccccccccccccc}
b&c&\temp &\hspace{-0.3cm} a&0&\temp & & & \\
\frac{ab}{1-c}&b&\temp &\hspace{-0.3cm}\frac{a^2}{1-c}&c&\temp & & & \\
\cline{1-8}
\frac{bc}{1-a}&\frac{c^2}{1-a}&\temp &\hspace{-0.3cm}b&0&\temp &\hspace{-0.3cm} a&0&\temp& & \\
0&a&\temp &\hspace{-0.3cm}0&b&\temp &\hspace{-0.3cm}0&c&\temp& & \\
\cline{1-11}
&&\temp &\hspace{-0.3cm}c&0&\temp &\hspace{-0.3cm}b&0&\temp&\hspace{-0.3cm} a &0&\temp \\
&&\temp &\hspace{-0.3cm}0&a&\temp &\hspace{-0.3cm}0&b&\temp&\hspace{-0.3cm} 0 & c&\temp \\
\cline{3-13}
&&&&&\temp &\hspace{-0.3cm}\ddots&&\temp &\hspace{-0.3cm} \ddots&&\temp&\hspace{-0.3cm} \ddots \\
\end{array}
\right).
\end{equation*}
Let us now use Theorem 2.1 of \cite{Clay} to obtain a relation between the Stieltjes transform of the spectral matrix $\bm\Psi$, given by definition by $B(\bm\Psi;z)=\int_{-1}^1(z-x)^{-1}d\bm\Psi(x)$, and the Stieltjes transform of the spectral matrix $\bm\Psi_0$ of the \emph{0-th associated process} $\bm P_0$, which is constructed by removing the first block row and column of $\bm P$. Observe that $\bm P_0$ has constant diagonal block entries. Therefore, using Theorem 2.1 of \cite{Clay}, it is easy to compute the corresponding Stieltjes transform $B(\bm\Psi_0;z)$, given by
$$
B(\bm\Psi_0;z)=\begin{pmatrix} \D\frac{z-b\pm\sqrt{(z-\sigma_+)(z-\sigma_-)}}{2ac} & 0 \\ 0 &\D\frac{z-b\pm\sqrt{(z-\sigma_+)(z-\sigma_-)}}{2c^2} \end{pmatrix},\quad z\in\mathbb{C}\setminus[\sigma_-,\sigma_+],
$$
where $\sigma_{\pm}$ are given by \eqref{WW1}. Now using again Theorem 2.1 of \cite{Clay} we have that the Stieltjes transform $B(\bm\Psi;z)$ of $\bm P$ satisfies the algebraic equation
\begin{equation}\label{sttra}
B(\bm\Psi;z)\Pi_{\bm\Psi}\left[zI_{2\times2}-B_0-A_0B(\bm\Psi_0;z)\Pi_{\bm\Psi_0}C_1\right]=I_{2\times2},
\end{equation}
where
$$
A_0=\begin{pmatrix} a& 0 \\ \frac{a^2}{1-c}& c\end{pmatrix}, \quad B_0=\begin{pmatrix} b& c \\ \frac{ab}{1-c}& b\end{pmatrix}, \quad C_1=\begin{pmatrix} \frac{bc}{1-a}& \frac{c^2}{1-a} \\ 0 &  a \end{pmatrix},
$$
and $\Pi_{\bm\Psi}$ and $\Pi_{\bm\Psi_0}$ are the inverses of the 0-th norms of each spectral matrix, given in this case by
$$
\Pi_{\bm\Psi}=\begin{pmatrix} 1& 0 \\ 0& \frac{c(1-c)}{ab}\end{pmatrix},\quad \Pi_{\bm\Psi_0}=\begin{pmatrix} 1& 0 \\ 0& c/a\end{pmatrix}.
$$
Solving \eqref{sttra} and after some tedious but straightforward computations we have that
$$
B(\bm\Psi;z)=\begin{pmatrix} B(\psi_{11};z) & B(\psi_{12};z) \\ B(\psi_{12};z)  & B(\psi_{22};z) \end{pmatrix},
$$
where
$$
B(\psi_{ij};z)=\frac{p_{ij}(z)+q_{ij}(z)\sqrt{(z-\sigma_+)(z-\sigma_-)}}{r_{ij}(z)},\quad z\in\mathbb{C}\setminus[\sigma_-,\sigma_+],
$$
and $p_{ij}(z), q_{ij}(z), r_{ij}(z)$ are polynomials given by
\begin{align}
\nonumber p_{11}(z)&=2(1-a)(1-c)z^3-4b(1-a)(1-c)z^2+\gamma_{11}z-b^2((a-c)^2-a-c),\\
\nonumber q_{11}(z)&=b\left[-2(1-a)(1-c)z-a(1-a)-c(1-c)\right],\\
\nonumber r_{11}(z)&=2(1-a)(1-c)z^4-4b(1-a)(1-c)z^3+(\gamma_{11}-b^2(2ac-a-c))z^2+4ab^2cz-2ab^2c,\\
\nonumber p_{12}(z)&=b\left[-(1-a)(1-c)z^3+b(1-a)(2-3c)z^2+\gamma_{12}z-bc((1-c)^2-a(1+c))\right]\\
\label{quss}q_{12}(z)&=b\left[-(1-a)(1-c)z^2+(1-a)(1+2c^2-a-3c)z+bc(1-c)\right]\\
\nonumber r_{12}(z)&=cr_{11}(z),\\
\nonumber p_{22}(z)&=b^2\left[-(1-a)z^3+(1-a)(b+2(1-c))z^2+\gamma_{22}z-b(ac-c^2+a+2c-1)\right]\\
\nonumber q_{22}(z)&=b\left[-(1-a)(1-c+a)z^2+2b(1-a)(1-c)z-b^2(1-c)\right]\\
\nonumber r_{22}(z)&=cr_{11}(z),
\end{align}
where
\begin{align*}
\gamma_{11}&=2(1-a)^3+2(1-c)^3-2+2ac(2+a+c-4ac)+b^2(2ac-a-c),\\
\gamma_{12}&=a^3+a^2(2c^2+2c-3)+a(1-c)(2c^2-4c+3)-(1-c)^2(1-3c),\\
\gamma_{22}&=-2a^2(1+c)+a(2c^2-5c+5)-3(1-c)^2.
\end{align*}
Therefore the Stieltjes transform $B(\bm\Psi;z)$ can be written as
$$
B(\bm\Psi;z)=\frac{\sqrt{(z-\sigma_+)(z-\sigma_-)}}{cr_{11}(z)}\begin{pmatrix} cq_{11}(z) & q_{12}(z) \\ q_{12}(z)  & q_{22}(z) \end{pmatrix}+\frac{1}{cr_{11}(z)}\begin{pmatrix} cp_{11}(z)& p_{12}(z)\\p_{12}(z) & p_{22}(z)\end{pmatrix}.
$$
Observe that $r_{11}(z)$ is a polynomial of degree 4, so the Stieltjes transform may have at most 4 real poles. We have not been able to compute an explicit expression of these zeros, but if we assume that $c=a$ then it is possible to have an explicit expression of them. 

So let us assume from now on that $c=a$. Then $b=1-2a$ and the polynomial $r_{11}(z)$ has now a simpler expression:
$$
r_{11}(z)=2(1-z)(z(1-a)+a)[(a-1)z^2+b^2z-ab^2].
$$
The zeros of $r_{11}(z)$ are given by
\begin{equation}\label{zerr}
1,\quad-\frac{a}{1-a},\quad\frac{b(b\pm\sqrt{2b^2-1})}{1+b}.
\end{equation}
If $\sqrt{2}/2<b<1$, i.e., $0<a<(2-\sqrt{2})/4$, there can be at most 4 different real zeros. This means that the spectral matrix will consist of a continuous density plus possibly some Dirac delta masses located at these zeros with certain weights. Let us write the spectral matrix as $\bm\Psi(x)=\bm\Psi_c(x)+\bm\Psi_d(x)$. Using the Stieltjes-Perron inversion formula the continuous part of the spectral matrix is given by
\begin{equation}\label{weid}
\bm\Psi_c(x)=\frac{\sqrt{(\sigma_+-x)(x-\sigma_-)}}{c\pi r_{11}(x)}\begin{pmatrix} cq_{11}(x) & q_{12}(x) \\ q_{12}(x)  & q_{22}(x) \end{pmatrix},\quad x\in[\sigma_-,\sigma_+]=[1-4a,1],
\end{equation}
where
\begin{align*}
q_{11}(x)&=-2(1-2a)(1-a)(x(1-a)+a),\\
q_{12}(x)&=-(1-2a)(1-a)(x(1-a)+a)(x-1+2a),\\
q_{22}(x)&=-(1-2a)(1-a)(x^2-2(1-a)(1-2a)x+(1-2a)^2).
\end{align*}
The discrete masses come from the residues at the simple poles of $B(\bm\Psi;z)$, given by \eqref{zerr}. It is possible to see that these zeros (if real) are in $[-1,1]\setminus(\sigma_-,\sigma_+)$. A straightforward computation shows that all these discrete masses are identically $\bm 0_{2\times2}$. Therefore $\bm\Psi_d(x)=\bm 0_{2\times2}$ and $\bm\Psi(x)=\bm\Psi_c(x)$. As a consequence the spectral matrix only have a continuous part given by \eqref{weid}. In the general case (if $c\neq a$) we have extensive computational evidences that the spectral matrix has again only a continuous part, given by \eqref{weid}.

\medskip

Now let us consider the stochastic Darboux transformation of the AR factorization. As we said in Section \ref{secAR} (just before \eqref{ccff2}), there is no free parameter and the factorization is unique. The Darboux transformation is then given by $\widehat{\bm P}$ in \eqref{hatp} (in block matrix form), but if we consider it as a Markov chain on $\ZZ$, the matrix $\widehat{P}$ is now a discrete-time birth-death chain on $\ZZ$. The transition probabilities of this birth-death chain are highly nontrivial and they can be computed from \eqref{DTABC2}. Let us call $J$ the continued fraction
$$
J=\cFrac{c}{1}-\cFrac{a}{1}-\cFrac{c}{1}-\cFrac{a}{1}-\cdots
$$
and $j_n=\alpha_n/\beta_n$ the corresponding convergents. Observe that $J$ is convergent as long as $a\le (1-\sqrt{c})^2$ and the limit is given by \eqref{JJp}. The sequences $\alpha_n$ and $\beta_n$ of the convergents $j_n$ can be computed recursively using the following relations:
\begin{align*}
\alpha_{2n}&=\alpha_{2n-1}-a\alpha_{2n-2},\quad n\geq1,\quad \alpha_{2n+1}=\alpha_{2n}-c\alpha_{2n-1},\quad n\geq0,\quad \alpha_{-1}=1,\quad \alpha_0=1,\\
\beta_{2n}&=\beta_{2n-1}-a\beta_{2n-2},\quad n\geq1,\quad \beta_{2n+1}=\beta_{2n}-c\beta_{2n-1},\quad n\geq0,\quad \beta_{-1}=0,\quad \beta_0=1.
\end{align*}
The first few convergents are given by
$$
j_0=0,\quad j_1=c,\quad j_2=\frac{c}{1-a},\quad j_3=\frac{c(1-c)}{b},\quad j_4=\frac{cb}{b-a(1-a)},\quad j_5=\frac{c(b-c(1-c))}{b(1-c)-a(1-a)},\quad\ldots
$$
A straightforward computation using \eqref{aatilde} and \eqref{cctilde} shows that the coefficients $\tilde x_n,\tilde y_n, \tilde r_n, \tilde s_n, n\in\ZZ,$ can be written in terms of the convergents $j_n$. Indeed,
\begin{align*}
\tilde{x}_n&=\frac{a}{1-j_{2n}},\;n\geq0,\quad\tilde{y}_n=1-\tilde{x}_n,\;n\geq1,\quad\tilde{y}_0=b,\quad\tilde\alpha=c,\quad\tilde{x}_{-n}=j_{2n+1},\quad\tilde{y}_{-n}=1-j_{2n+1},\;n\geq1,\\
\tilde{r}_n&=j_{2n},\quad\tilde{s}_n=1-j_{2n},\;n\geq0,\quad\tilde{r}_{-n}=\frac{a}{1-j_{2n-1}},\quad\tilde{s}_{-n}=1-\tilde{r}_{-n},\;n\geq1.
\end{align*}
Therefore, the coefficients $\hat{a}_n,\hat{b}_n,\hat{c}_n, n\in\ZZ,$ of the birth-death chain $\widehat{P}$ are given by
\begin{align*}
\hat{a}_n=&\frac{a(1-j_{2n+2})}{1-j_{2n}},\quad n\geq0,\quad\hat{a}_{-n}=\frac{a(1-j_{2n+1})}{1-j_{2n-1}},\quad n\geq1,\\
\hat{c}_n=&\frac{j_{2n}(1-a-j_{2n})}{1-j_{2n}},\quad n\geq0,\quad\hat{c}_{-n}=\frac{j_{2n+1}(1-a-j_{2n+1})}{1-j_{2n+3}},\quad n\geq1,\\
\hat{b}_n=&1-\hat{a}_n-\hat{c}_n,\quad n\in\ZZ.\\
\end{align*}
Although these coefficients are highly nontrivial it is possible to compute the spectral matrix $\widehat{\bm\Psi}$ of the birth-death chain $\widehat{P}$ using Theorem \ref{thmp}. Indeed, the spectral matrix is given by
$$
\widehat{\bm\Psi}(x)=x\widetilde{S}_0^{-1}\bm\Psi(x)\widetilde{S}_0^{-T},
$$
where $\bm\Psi(x)$ is given by \eqref{weid} and 
$$
\widetilde{S}_0^{-1}=\begin{pmatrix}1&0\\-a/b&(1-c)/b\end{pmatrix}.
$$
In other words,
$$
\widehat{\bm\Psi}(x)=\frac{x\sqrt{(\sigma_+-x)(x-\sigma_-)}}{c\pi r_{11}(x)}\begin{pmatrix} cq_{11}(x) & -\frac{ac}{b}q_{11}(x)+\frac{1-c}{b}q_{12}(x) \\ -\frac{ac}{b}q_{11}(x)+\frac{1-c}{b}q_{12}(x)  & \frac{a^2c}{b^2}q_{11}(x)-\frac{2a(1-c)}{b^2}q_{12}(x)+\frac{(1-c)^2}{b^2}q_{22}(x) \end{pmatrix},
$$
where $q_{ij}(x)$ are given by \eqref{quss}.

\end{document}